\theoremstyle{plain}
\newtheorem{theorem}{Theorem}[section]
\newtheorem{lemma}[theorem]{Lemma}
\DeclareMathOperator{\spann}{span}
\newcommand{\calV}{\mathcal V}
\newcommand{\calW}{\mathcal W}
\title{Discontinuous Galerkin discretization of conservative dynamical low-rank approximation schemes for the Vlasov--Poisson equation
}
\author{
Andr\'e Uschmajew\thanks{Institute of Mathematics \& Centre for Advanced Analytics and Predictive Sciences, University of Augsburg, 86159 Augsburg, Germany} 
\and
Andreas Zeiser\thanks{Faculty 1: School of Engineering -- Energy and Information, HTW Berlin -- University of Applied
Sciences, 12459 Berlin, Germany}
}
\date{}
\begin{document}

\maketitle

\begin{abstract}
A numerical dynamical low-rank approximation (DLRA) scheme for the solution of the Vlasov--Poisson equation is presented. Based on the formulation of the DLRA equations as Friedrichs' systems in a continuous setting, it combines recently proposed conservative DLRA methods with a discontinuous Galerkin discretization. The resulting scheme is shown to ensure mass and momentum conservation at the discrete level. In addition, a new formulation of the conservative integrator is proposed based on its interpretation as a tangent space projector splitting scheme. Numerical experiments validate our approach in one- and two-dimensional simulations of Landau damping.
As a demonstration of feasibility, it is also shown that the rank-adaptive unconventional integrator can be combined with mesh adaptivity.
\end{abstract}

\section{Introduction}

The Vlasov--Poisson equation is one of the most important equations in plasma kinetics. It models the time evolution of an electron density function $f = f(t,\bm x,\bm v)$ of a collisionless plasma in $d \le 3$ spatial dimensions. It reads
\begin{equation} \label{eq:transport}
 \partial_t f + \bm v \cdot \nabla_{\bm x} f - \bm E(t, \bm x) \cdot \nabla_{\bm v} f
	= 0 \quad \textnormal{ in } \Omega = \Omega^{(x)} \times \Omega^{(v)}, \quad t \in (0,T),
\end{equation}
where $\bm E$ is the electric field dependent on the density $f$ via the Poisson equation:
\begin{equation} \label{eq:poisson}
\bm E(t, \bm x) = -\nabla_{\bm x} \Phi(t,\bm x), \quad 
	-\Delta \Phi = 1 - \int_{\Omega^{(v)}} f(t, \bm x, \bm v)\, \mathrm d \bm v.
\end{equation}
The initial condition is
\[
 f(0,\bm x, \bm v) = f_0(\bm x, \bm v).
\]
In this work we consider only spatial periodic domains $\Omega^{(x)} \subset \mathbb R^d$ and $\Omega^{(v)} = \mathbb R^d$. Otherwise the equation has to be accompanied by additional inflow boundary conditions, but this case is not considered here.

For $d=2,3$ the numerical solution of the Vlasov--Poisson equation is challenging, since a dynamical problem in $2d$ dimensions needs to be discretized. To tackle the problem, methods such as  particle methods \cite{Verboncoeur2005},  adaptive multiscale methods \cite{Deriaz2018}, and sparse grids \cite{Huang2023} have been used.

Recently, dynamical low-rank approximation (DLRA)~\cite{Koch07} has been proposed in the seminal paper~\cite{einkemmer2018a} for reducing the dimensionality based on a separation of the space and velocity variables. In DLRA the density $f$ is represented as
\begin{equation}\label{eq:dlra}
f_r(t,\bm x, \bm v) = \sum_{i=1}^r\sum_{j=1}^r X_i(t,\bm x) S_{ij}(t) V_j(t,\bm v)
\end{equation}
and its time evolution is applied to the spatial and velocity components separately. The general idea of low-rank approximation has been demonstrated to be highly effective for the Vlasov--Poisson equation in several works~\cite{Allmann-Rahn2022,guo2024a,guo2024b,Hauck2023}. A comprehensive literature survey can be found in~\cite{einkemmer2024review}.

As for DLRA methods, a recent research focus has been on the development of conservative methods, which is somewhat non-trivial to achieve. The ideas developed in~\cite{einkemmer2021} allowed for significant progress in this direction. By combining them with the rank-adaptive unconventional integrator from~\cite{CerutiKuschLubich2022}, also called basis update and Galerkin (BUG) integrator~\cite{Ceruti2024}, in~\cite{einkemmer2023} a robust numerical time stepping scheme for the factors $X_i$, $S_{ij}$ and $V_j$ in the DLRA representation~\eqref{eq:dlra} has been presented, which is mass and momentum conservative and allows the rank $r$ to be chosen adaptively. Further improvements have been achieved recently in~\cite{einkemmer2025}, where it is shown that augmentation of the spatial basis with gradients is not required for ensuring local conservation properties, hence leading to a more efficient algorithm.

The goal of the present work is to combine the conservative DLRA integrator from~\cite{einkemmer2025} with a discontinuous Galerkin (DG) discretization. This is motivated by the fact that the governing equations for the DLRA factors in the continuous setting take the form of Friedrichs' systems, as worked out in~\cite{uschmajew2024}. DG methods possess favorable numerical properties for solving such systems, and, in addition, offer the possibility for adaptive mesh refinement. A (non-adaptive) DG based DLRA algorithm for one spatial dimension has been worked out in~\cite{Ceruti2022}. With this work we make two main contributions. First, for a fixed discretization, we obtain a rank-adaptive and conservative DG-based DLRA approximation scheme for the Vlasov--Poisson equation. Second, we demonstrate how our approach of first formulating the DLRA scheme in a continuous setting and then discretizing the subproblems allows us to  exploit the capabilities of DG solvers regarding mesh adaptivity. Compared to, e.g.,~\cite{Hauck2023}, where the DLRA approach is applied to a full DG discretization of a linear kinetic equation, this approach offers much more flexibility. While the mesh adaptive version of our solver in its current form does not yet yield a conservative scheme due to the coarsening steps, we believe this approach to be a promising route for further development.

In more detail, the outline and contributions of the paper can be summarized as follows. In section~\ref{sec: equations of motion} we derive the equations of motion for the DLRA approach. We review the standard DLRA integrators in section~\ref{sec: Traditional DLRA integrators} and the required modifications for the conservative integrator as proposed in~\cite{einkemmer2021, einkemmer2023} in section~\ref{sec: Modified DLRA model}. We pay special attention to formulating the equations in a continuous setting as Friedrichs' systems in weak form, which makes it easier to enable spatially adaptive DG discretizations later. Our derivation of the equations for the conservative integrator in section~\ref{sec: modified L step} is based on a slightly different tangent space decomposition compared to~\cite{einkemmer2021,einkemmer2023,einkemmer2025}. In particular, we present a new version of the modified L-step using a suitable block QR decomposition of the DLRA representation, which avoids forming normal equations. This L-step is then included in the rank-adaptive unconventional integrator, which is introduced in section~\ref{sec: Modified unconventional integrator}.

As a side result, our special QR representation also suggests a robust projector-splitting integrator allowing for fixed velocity basis functions which has not been featured in the original works. We present this alternative integrator briefly at the end of section~\ref{sec: modified L step}. While it is not clear that this scheme is conservative by itself, it performs well in experiments and could be of some conceptual interest for further developments.

Section~\ref{sec: DG discretization} presents the main theoretical contribution. We introduce the DG discretization for the DLRA equations in sections~\ref{sec: Discontinuous Galerkin discretization for Friedrichs' systems} and~\ref{sec: Application of DG discretization to DLRA}. Special attention is paid to the approximation of derivatives and numerical fluxes at the element interfaces. As explained in~\cite{einkemmer2021,einkemmer2023,einkemmer2025}, the conservation properties of time integrators can be ensured by enforcing discrete counterparts of certain continuity equations arising in the Vlasov--Poisson system. In section~\ref{sec: Conservativion of physical invariants}, we adopt the continuous approach from~\cite{einkemmer2025} to rigorously prove these discrete continuity equations for mass and momentum conservation in our setting under the assumption that central fluxes are used in the DG discretization and the explicit Euler method is applied for time-stepping. The proof holds for arbitrary spatial dimensions. In section~\ref{sec: spatial adaptivity}, we discuss spatial adaptivity in the DG discretization, though currently in a simplified form.

Finally, in section~\ref{sec: numerical simulations} we present numerical simulations that illustrate the feasibility and advantages of our approach, and confirm the theoretical findings. As a main contribution, we present simulations for Landau damping in two spatial dimensions with conservation of mass and momentum in section~\ref{sec: Landau 2d2v}. We also demonstrate the use of spatial adaptivity for a 2d transport problem in~\ref{sec: transport adaptive DG}. While these experiments are still conducted in a simplified setting of periodic boundaries, they serve as an important proof of concept that DG methods can be successfully used for the DLRA simulation of Vlasov--Poisson equations.

For brevity, we will frequently use indices without indicating the limits. The following indices will be used: $i,j$ take values in $\{1,\dots,r\}$, $a,b$ in $\{1,\dots,m\}$, $p,q$ in $\{m+1,\dots,r\}$, and $s$ in $\{1,\ldots,d\}$. The values of other indices will be clear from the context.

\section{Conservative DLRA integrator}\label{sec: equations of motion}

In this section, we recall the conservative DLRA integrator as first presented in~\cite{einkemmer2021} and further developed in~\cite{einkemmer2023}. With the goal of an adaptive DG discretization in mind, we will, however, derive the governing equations from a weak formulation of the Vlasov--Poisson equation as given in~\cite[Sec.~76.3]{ern_2021}. Denoting the $L_2$ inner product on $\Omega = \Omega^{(x)} \times \Omega^{(v)}$ with
\[
 ( u, w )_{xv} = \int_{\Omega^{(x)}} \int_{\Omega^{(v)}}  u(\bm x, \bm v) \, w(\bm x, \bm v) \, \mathrm d (\bm x, \bm v),
\]
the weak formulation of~\eqref{eq:transport} reads: find $f \in C^1([0,T],\calW)$ such that for every $t \in (0,T)$ it holds
\begin{equation} \label{eq:weak}
( \partial_t f(t),  w )_{xv} + a(t, f(t),w) = 0
\quad \textnormal{ for all } w \in \calW,
\end{equation}
where $\calW$ is an appropriate closed subspace of $L_2(\Omega)$ (e.g.~$\calW \subseteq H^1(\Omega)$ is sufficient), and
\[
a(t,u,w) = \int_\Omega \bm v \cdot \nabla_{\bm x} u  \, w - \bm E (t,\bm x) \cdot \nabla_{\bm v} u \, w \, \mathrm d\bm x \mathrm d \bm v.
\]

Since we aim to separate the space and velocity variables, we will assume that the space $\calW$ has a tensor product structure
\[
 \calW = \calW^{(x)} \otimes \calW^{(v)} \subseteq L_2(\Omega^{(x)}) \otimes L_2(\Omega^{(v)}).
\]
We also introduce the notation
\[
( X, Y )_x = \int_{\Omega^{(x)}} X(\bm x) \, Y(\bm x) \, \mathrm d \bm x, \qquad
( V, W )_v = \int_{\Omega^{(v)}}  V(\bm v) \, W(\bm v) \, \mathrm d \bm v
\]
for $L_2$ inner products on $\Omega^{(x)}$ and $\Omega^{(v)}$.

In the DLRA ansatz, one ultimately seeks a solution $f = f_r$ of the form~\eqref{eq:dlra}. It admits several equivalent representations, such as
\begin{subequations}
\begin{align}
 f_r(t, \bm x, \bm v) &= \sum_{ij} X_i(t,\bm x) S_{ij}(t) V_j(t, \bm v) \label{eq: XSV representation} \\
 &= \sum_{j} K_j(t, \bm x) V_j(t,\bm v) \label{eq: KV representation} \\
 &= \sum_{i} X_i(t, \bm x) L_i(t, \bm v) \label{eq: XL representation},
\end{align}
\label{eq: all}
\end{subequations}
where the functions $X_1(t),\dots,X_r(t),K_1(t),\dots,K_r(t) \in \calW^{(x)}$ and $V_1(t),\dots,V_r(t), L_1(t),\dots,L_r(t) \in \calW^{(v)}$ are related through
\[
 K_j(t) = \sum_{i} X_i(t) S_{ij}(t), \qquad L_i(t) = \sum_{j} S_{ij}(t) V_j(t).
\]
Clearly, $K_j(t) \in \spann \{ X_1(t),\dots,X_r(t)\}$ and $L_i(t) \in \spann \{ V_1(t),\dots,V_r(t)\}$ for every $i$ and $j$. The $X_1(t),\dots,X_r(t)$ and $V_1(t),\dots,V_r(t)$ are usually assumed to be orthonormal systems in $\calW^{(x)}$ and $\calW^{(v)}$ for all $t$ without loss of generality, whereas this is not the case for the combined functions $K_1(t),\dots,K_r(t)$ and $L_1(t),\dots,L_r(t)$, respectively. 

\subsection{Standard DLRA integrators}\label{sec: Traditional DLRA integrators}

We first review the basic ideas of DLRA integrators before presenting the modifications proposed in~\cite{einkemmer2021,einkemmer2023,einkemmer2025} for obtaining conservative integrators in the following subsections. Using the representations~\eqref{eq: XSV representation},~\eqref{eq: KV representation} or~\eqref{eq: XL representation} at an initial time $t_0$ one can derive equations of motion separately for the matrix $S$, the functions $K_1(t),\dots,K_r(t) \in \calW^{(x)}$, and the functions $L_1(t),\dots,L_r(t) \in \calW^{(v)}$, respectively.

For example, given an initial function $f_r(t_0)$ of the considered form, equations for the time evolution of $K_1(t),\dots,K_r(t)$ in~\eqref{eq: KV representation} on a time interval $[t_0,t_1]$ are obtained by fixing the current $V_j(t_0)$ for $j=1,\dots,r$ and then restricting the weak formulation~\eqref{eq:weak} to the linear subspace
\begin{equation}\label{eq: space TV}
T_{\bm V(t_0)} \coloneqq \calW^{(x)} \otimes \spann\{V_1(t_0),\dots,V_r(t_0)\} = \left\{ \sum_j \psi_j^{(x)} \otimes V_j(t_0) : \psi_j^{(x)} \in \calW^{(x)} \right\}.
\end{equation}
This means that for $t \in [t_0,t_1]$ one seeks $f(t) = \sum_j K_j(t) \otimes V_j(t_0) \in T_{\bm V(t_0)}$ such that
\begin{equation*} \label{eq:weakTV}
( \partial_t f(t),  w )_{xv}  + a(t, f(t),w) = 0
\quad \textnormal{ for all } w \in T_{\bm V(t_0)}.
\end{equation*}
This is called a \emph{K-step}. By collecting the sought functions $K_1(t),\dots,K_r(t)$ as well as the appearing test functions $\psi_1^{(x)},\dots,\psi_r^{(x)}$ into vectors
\[
\bm K(t) = \big[ K_j(t) \big]_j, \qquad \bm \psi^{(x)} = \big[ \psi_j^{(x)} \big]_j
\]
in $(\calW^{(x)})^r$, the K-step can be written in the form of a Friedrichs' system (a coupled system of hyperbolic PDEs in weak form) as follows:
\begin{equation} \label{eq:weakK}
\big( \partial_t  \bm K(t) + \sum_{s} \mathsf A^{(x,s)} \cdot \partial_{x_s} \bm K(t) - E_s(t) \mathsf B^{(x,s)}  \cdot \bm K(t), \bm \psi^{(x)}\big)_{L_2(\Omega^{(x)}, \mathbb R^r)} = 0
 \quad \textnormal{for all } \bm \psi^{(x)} \in (\calW^{(x)})^r,
\end{equation}
with initial condition
\[
 \bm K(t_0) = \left[ \sum_i X_i(t_0) S_{ij}(t_0) \right]_j
\]
and suitable matrices $\mathsf A^{(x,s)}$ and $\mathsf B^{(x,s)}$; cf.~\cite{uschmajew2024}. We present these matrices for the conservative integrator in section~\ref{sec: tangent space decomposition}.

Equations for the velocity functions $L_1(t),\dots,L_r(t)$ in the representation~\eqref{eq: XL representation} on a time interval $[t_0,t_1]$ are obtained in an analogous way by fixing $X_1(t_0),\dots,X_r(t_0)$ instead, and restricting the weak formulation~\eqref{eq:weak} to the linear space
\begin{equation}\label{eq: space TX}
T_{\bm X(t_0)} \coloneqq \spann\{ X_1(t_0),\dots,X_r(t_0) \} \otimes \calW^{(v)} = \left\{ \sum_i X_i(t_0) \otimes \psi_i^{(v)}  \colon \psi_i^{(v)} \in \calW^{(v)} \right\}
\end{equation}
instead. This is called the \emph{L-step} and also results in a Friedrichs' system, but this time for the vector function
\[
\bm L(t) = \big[ L_i(t) \big]_i \in (L_2(\Omega^{(v)}))^r
\]
and with test functions of the same form. The system will be presented for the conservative integrator in section~\ref{sec: tangent space decomposition}.

Finally, in the \emph{S-step} one only considers the time evolution of the coefficient matrix $S(t) = [S_{ij}(t)]$ in~\eqref{eq: XSV representation} with the basis functions $X_i(t_0)$ and $V_j(t_0)$ being fixed. One can do so by restricting the weak formulation~\eqref{eq:weak} to the \emph{finite-dimensional} space
\begin{equation}\label{eq: space TXV}
T_{\bm X(t_0),\bm V(t_0)} \coloneqq \spann\{ X_1(t_0),\dots,X_r(t_0) \} \otimes \spann \{ V_1(t_0),\dots,V_r(t_0) \}.
\end{equation}
This gives an ODE of the form
\begin{equation} \label{eq:weakS}
\partial_t S(t) + \sum_s \Big[ \mathsf B^{(v,s)} S(t) \big(\mathsf A^{(x,s)}\big)^T
	+ \mathsf A^{(v,s)} S(t) \big(\mathsf B^{(x,s)}\big)^T \Big] = 0
\end{equation}
for the matrix $S(t)$ with initial condition $S(t_0)$ and suitable matrices $\mathsf A^{(x/v,s)}, \, \mathsf B^{(x/v,s)}$.

Based on the above restriction of the time evolution to particular subspaces, the so-called projector splitting integrator~\cite{lubich2014,einkemmer2018a} can be derived. It combines the three steps for updating the factors from a time point $t_0$ to a time point $t_1$ in the order ``KSL'' as outlined below. The name \emph{projector splitting} stems from the fact that the integrator realizes in first order a time step on the subspace $T_{\bm V(t_0)} + T_{\bm X(t_0)} \subset \calW^{(x)} \otimes \calW^{(v)}$, which is the tangent space to the manifold of functions of the form~\eqref{eq: XSV representation} at the current point $f_r(t_0)$ (under the mild technical assumption that $S(t_0)$ has full rank $r$). Since the spaces $T_{\bm V(t_0)}$ and $T_{\bm X(t_0)}$ used for the K-step and L-step intersect precisely in the space $T_{\bm X(t_0), \bm V(t_0)}$ defined in~\eqref{eq: space TXV}, this intersection space would be considered ``twice'', both in the K- and the L-step. Therefore, it is necessary to perform the S-step in $T_{\bm X(t_0), \bm V(t_0)}$ ``backwards'' to compensate for this. The individual steps in the following \emph{KSL scheme}, however, also always account for the updated basis functions from the previous step. We refer to~\cite{lubich2014} for why the S-step is best executed between the K- and L-step.

\subsubsection*{Projector splitting integrator: KSL scheme}

In detail, the KSL scheme for a time step from $t_0$ to $t_1$ proceeds as follows.

\begin{enumerate}
 \item
 Perform a K-step on $T_{\bm V(t_0)}$ by solving the Friedrichs' systems~\eqref{eq:weakK} on $[t_0,t_1]$ for new functions $K_1(t_1),\dots,K_r(t_1) \in \calW^{(x)}$. Choose an orthonormal basis $X_1(t_1),\dots,X_r(t_1)$ for a subspace containing $K_1(t_1),\dots,K_r(t_1)$
together with coefficients $\tilde S_{ij}$ such that $K_j(t_1) = \sum_{i} \tilde S_{ij} X_i(t_1)$.
 \item
 Take $S(t_0) = \tilde S$ as start point for a ``backward'' S-step on $T_{\bm X(t_1), \bm V(t_0)}$, that is, solve the ODE~\eqref{eq:weakS} with $\partial S(t) - \ldots {}$ instead of $\partial S(t) + \ldots {}$ and using the already updated $X_1(t_1),\dots,X_r(t_1)$ instead of $X_1(t_0),\dots,X_r(t_0)$. This gives a new matrix $\hat S$ at time $t_1$.
 \item
 Perform an L-step on $T_{\bm X(t_1)}$ by solving the corresponding Friedrichs' system for new functions $L_1(t_1),\dots,L_r(t_1) \in \calW^{(v)}$ with recomputed initial conditions $L_i(t_0) = \sum_j \hat S_{ij} V_j(t_0)$. Choose an orthonormal basis $V_1(t_1),\dots,V_r(t_1)$ for a subspace containing of $L_1(t_1),\dots,L_r(t_1)$ together with, and obtain coefficients $S_{ij}(t_1)$ such that $L_i(t_1) = \sum_j S_{ij}(t_1) V_j(t_1)$.
\end{enumerate}
The result at time $t_1$ is the function
\[
f_r(t_1,\bm x, \bm v) = \sum_{ij} X_i(t_1, \bm x) S_{ij}(t_1) V_j(t_1, \bm v).
\]

The projector splitting integrator has a clear geometric interpretation and favorable numerical properties but has the practical disadvantage that the target rank $r$ needs to be selected beforehand and remains fixed over time. Since the rank at any time should be large enough to ensure sufficient accuracy while remaining small enough to keep the computations feasible, an adaptive procedure is more desirable. The rank-adaptive unconventional DLRA integrator proposed in~\cite{CerutiKuschLubich2022} addresses this issue using a different ordering of steps and basis augmentation. It will be discussed in section~\ref{sec: Modified unconventional integrator}.

\subsection{Conservative DLRA integrator}\label{sec: Modified DLRA model}

The exact solution to the Vlasov-Poisson equation conserves several physically relevant quantities such as mass, momentum and energy. Ensuring this in DLRA simulations requires nontrivial modifications and has been addressed only recently by Einkemmer and Joseph in~\cite{einkemmer2021}. Consider for example the mass
\[
m(t) = \int_{\Omega^{(x)}} \int_{\Omega^{(v)}}  f(t,\bm x,\bm v) \, \mathrm d (\bm x, \bm v).
\]
Employing a DLRA model~\eqref{eq:dlra} for $f$ would require a space $\calW^{(v)} \subseteq L_2(\mathbb R^d) \cap L_1(\mathbb R^d)$ for the velocity dependence to ensure that the mass is well-defined. (For the space $\calW^{(x)} \subseteq L_2(\Omega^{(x)})$ there is no problem since the periodic spatial domain $\Omega^{(x)}$ is assumed to be bounded.) On the other hand, since we are considering weak formulations, quantities of interest can only be assessed by testing with suitable functions. For mass, this would be the constant function $(\bm x, \bm v) \mapsto 1$ since $m(t) = (f(t), 1)_{xv}$. For momentum and kinetic energy, this would be the functions $(\bm x, \bm v) \mapsto v_s$ and $(\bm x, \bm v) \mapsto \| \bm v \|^2$, respectively. All these functions are clearly not in $L_2(\mathbb R^d) \cap L_1(\mathbb R^d)$ either. As explained in~\cite{einkemmer2021}, choosing $\calW^{(v)}$ as a space of functions with bounded support is not recommended to address this problem since it implicitly restricts the velocity variable to a bounded domain which is not the desired physical model.

To resolve the conflict, Einkemmer and Joseph propose the following modifications. First, the space of velocity functions is enlarged to actually accommodate for the desired test functions. Specifically, one assumes
\[
V_j \in \calW^{(v)}_{\omega} \subseteq L_2(\Omega^{(v)}; \omega),
\]
where $L_2(\Omega^{(v)}; \omega)$ is a Hilbert space of \emph{weighted} $L_2$ functions equipped with inner product and norm
\[
(V,W)_{v,\omega} \coloneqq \int_{\Omega^{(v)}} V(\bm v) W(\bm  v) \omega(\bm v) \, \mathrm d \bm v, \qquad \| V \|_{v,\omega} \coloneqq (V,V)_{v,\omega}^{1/2},
\]
with a strictly positive and sufficiently fast decaying weight function $\omega$ ensuring that $L_2(\Omega^{(v)};\omega)$ contains the functions $1$, $v_1,\dots,v_d$, and $\| \bm v \|^2$. A somewhat natural choice is the Gaussian weight function
\begin{equation}\label{eq: Gauss weight}
\omega(\bm v) = \exp(- \| \bm v \|^2/2).
\end{equation}

Second, in order to be able to test with $L_2(\Omega^{(v)};\omega)$ functions in the ordinary $L_2$ inner product, the low-rank model for $f$ is modified to explicitly carry the weight function $\omega$, that is, instead of~\eqref{eq:dlra} one now seeks $f = f_r$ of the form
\begin{equation}\label{eq:weighted dlra}
 f_r (t,\bm x, \bm v) = \omega(\bm v) \sum_{ij} X_i(t,\bm x) S_{ij}(t) V_j(t,\bm v)
\end{equation}
where $X_1(t),\dots,X_r(t) \in \calW^{(x)}$ as before, but now $V_1(t),\dots,V_r(t) \in \calW^{(v)}_{\omega}$ are allowed.

Third, since ultimately only test spaces of the form $T_{\bm V(t)}$, $T_{\bm X(t)}$ or $T_{\bm X(t),\bm V(t)}$ (cf.~\eqref{eq: space TV},~\eqref{eq: space TX} and~\eqref{eq: space TXV}) will be used in the numerical integrator, it will turn out necessary to ensure that at every time the space spanned by the velocity functions $V_1(t),\dots,V_r(t)$ in the weighted DLRA representation~\eqref{eq:weighted dlra} contains the functions $\bm v \mapsto 1$, $\bm v \mapsto v_s$ and $\bm v \mapsto \| \bm v \|^2$. This is achieved by simply fixing some of the factors $V_j$ explicitly and not allowing them to vary. Following the notation in~\cite{einkemmer2021}, the first $m$ functions $V_j$ will be kept fixed and denoted as
\[
U_a = V_a, \quad a = 1,\dots,m,
\]
such that $\spann\{U_1,\dots,U_m\} \subseteq \calW^{(v)}_{\omega}$ contains the desired functions. In particular, $m \ge 1$ for mass, $m \ge 1 + d$ for mass and momentum conservation, respectively. The remaining $V_j$ are still variable and will be denoted by $W_p$ instead:
\[
W_p = V_p, \quad p = m+1,\dots,r.
\]
The DLRA model set for $f_r$ then ultimately is
\begin{equation}\label{eq:Mr weighted}
\begin{split}
\mathcal M_{r,\omega} = \Big\{ \, f_r = \omega \sum_{ia} &S_{ia} X_i \otimes  U_a + \omega \sum_{ip} S_{ip} X_i \otimes W_p  : \\ &S \in \mathbb R^{r \times r}, \, X_i \in \calW^{(x)},  \, W_p \in \calW^{(v)}_{\omega} \text{ and } (U_a, W_p)_{v,\omega} = 0 \, \Big\}.
\end{split}
\end{equation}
The orthogonality relations $(U_a, W_p)_{v,\omega} = 0$ in $\calW^{(v)}_{\omega}$ should hold for all pairs $a,p$ and ensure that the $W_p$ are linearly independent from the $U_a$ to avoid trivial cases. When working with functions from $\mathcal M_{r,\omega}$ we will, without loss of generality, additionally assume that the $X_1, \dots X_r$ are pairwise orthonormal in $L_2(\Omega^{(x)})$ and the $W_{m+1},\dots,W_r$ are pairwise orthonormal in $L_2(\Omega^{(v)};\omega)$. We also assume that the fixed functions $U_1,\dots,U_m$ are pairwise orthonormal in $L_2(\Omega^{(v)};\omega)$.

As a fourth and final modification, Einkemmer and Joseph propose to consider the time-dependent variational principle for the time evolution on $\mathcal M_{r,\omega}$ in the form of a Petrov--Galerkin formulation as
\begin{equation} \label{eq:weak_dlra}
\big( \partial_t f_r(t),  \frac{1}{\omega} w \big)_{xv}  + a\big(t, f_r(t),\frac{1}{\omega} \, w\big) = 0
\textnormal{\quad for all } w \in T_{f_r(t)} \mathcal M_{r,\omega},
\end{equation}
where $T_{f_r(t)} \mathcal M_{r,\omega}$ denotes the tangent space to the ``manifold'' $\mathcal M_{r,\omega}$. We will not rigorously investigate manifold properties of this set, but it is clear from the multilinearity of the representation in~\eqref{eq:Mr weighted} that the tangent space at $f_r \in \mathcal M_{r,\omega}$ takes the form
\begin{equation} \label{eq:TMr}
\begin{split}
T_{f_r} \mathcal M_{r,\omega} =
	\Big\{ \,  \omega \sum_{ij}  \big(&\dot X_i S_{ij} + X_i \dot S_{ij} \big) \otimes V_j
		+  \omega  \sum_{ip} S_{ip} X_i \otimes \dot W_p : \\
	& \dot S \in \mathbb R^{r\times r}, \, \dot X_i \in \calW^{(x)}, \,
	\dot W_p \in \calW^{(v)}_{\omega} \text{ and } (U_a, \dot W_p )_{v,\omega} = 0 \, \Big\}.
\end{split}
\end{equation}
As one can see, the weight function $\omega$ introduced for $f_r$ in~\eqref{eq:weighted dlra} carries over to the tangent space at $f_r$, which would result in a squared weight when testing with $f_r$ itself. Dividing by $\omega$ in~\eqref{eq:weak_dlra} hence removes $\omega$ from the tangent space and, in particular, ensures that the functions $(\bm x,\bm v) \mapsto U_a(\bm v)$ are actually available as test functions as desired (by choosing $\dot S = 0$, $\dot W_p = 0$ for all $p$, and $\dot X_i = \alpha_i 1$ such that $\sum_{i} \alpha_i S_{ij} = \delta_{ja}$ in~\eqref{eq:TMr}, which is possible if $S$ has full rank $r$).

As in the standard DLRA integrators, the formulation~\eqref{eq:weak_dlra} will next be further restricted to linear subspaces within the tangent space. However, since some of the $V_a = U_a$ are fixed for $a=1,\dots,m$, the corresponding K-, L- and S-steps require a more careful definition. We discuss the corresponding tangent space decomposition in the next section.

\subsection{Tangent space decomposition and modified projector splitting}\label{sec: tangent space decomposition}

In~\cite{einkemmer2021} the equations of motion for $X_i$, $S$ and $V_j$ factors have been derived, but an inversion of $S$ was required for the updates of $V_j$. As an extension of their ideas, we derive a decomposition of the tangent space $T_{f_r}  \mathcal M_{r,\omega}$ into orthogonal subspaces which allows to avoid the inversion of $S$, in the same spirit as in the original projector splitting integrator~\cite{lubich2014}. For that, consider a particular low-rank representation of a given $f_r \in \mathcal M_{r,\omega}$ tailored to the fact that the first $m$ velocity basis functions are fixed. We can assume that $f_r$ is initially given in the form $f_r = \omega \sum_{j} K_j \otimes V_j$. We we can then apply a suitable orthogonal transformation to the functions $K_1,\dots,K_r$ using a variant of (block) QR decomposition, to obtain a new set of pairwise orthonormal functions $X_1,\dots,X_r \in \calW^{(x)}$ such that $K_j = \sum_i X_i S_{ij}$ as usual, but now with a block triangular matrix $S$ of the form
\begin{equation}\label{eq:triangular S}
S = \begin{pmatrix} [S_{ab}] & 0_{m \times (r-m)} \\ [S_{pb}] & [S_{pq}]  \end{pmatrix}.
\end{equation}
Hence, without loss of generality, we can write $f_r$ as
\begin{equation}\label{eq:QR representation}
f_r = \omega \, \sum_{ab} S_{ab} X_a \otimes U_b + \omega \sum_{pb} S_{pb} Z_p \otimes U_b + \omega  \sum_{pq} S_{pq} Z_p \otimes W_q
\end{equation}
where we introduced the notation
\[
Z_p = X_p, \quad p=m+1,\dots,r.
\]

Taking this representation of $f_r$, and additionally assuming that the matrix $S$ has full rank $r$, it is easy to deduce from~\eqref{eq:TMr} that the tangent space at $f_r$ can be decomposed into
\[
T_{\bm V,\omega} = \Big\{ \, \omega \sum_j \psi_j^{(x)} \otimes V_j : \psi_j \in \calW^{(x)} \, \Big\}
\]
and (using that $S_{ap} = 0$ in the second sum of~\eqref{eq:TMr})
\begin{equation}\label{eq: space Tz}
T_{\bm Z, \omega} = \Big\{ \, \omega \sum_p Z_p^{} \otimes \widehat{\psi}_p^{(v)} : \widehat{\psi}_p^{(v)} \in \calW^{(v)}_{\omega}, \, ( U_a, \widehat{\psi}_p^{(v)} )_{v,\omega} = 0 \, \Big\},
\end{equation}
such that
\begin{equation}\label{eq: T_V+T_Z}
T_{f_r} \mathcal M_{r,\omega} =  T_{\bm V, \omega} + T_{\bm Z,\omega}.
\end{equation}
However, this sum is not direct, as the spaces intersect in
\begin{equation}\label{eq:space T_ZV}
T_{\bm Z,\bm V, \omega} = T_{\bm Z, \omega} \cap T_{\bm V, \omega} = \Big\{ \, \omega \sum_{pq} \Sigma_{pq} Z_p \otimes W_p : \Sigma_{pq} \in \mathbb R \, \Big\}.
\end{equation}
Based on the tangent space decomposition~\eqref{eq: T_V+T_Z}--\eqref{eq:space T_ZV}, we design a projector splitting integrator as well as a variant of the unconventional integrator. For this, we next derive the corresponding equations of motion as Friedrichs' systems on subspaces $T_{\bm V,\omega}$ (modified K-step), $T_{\bm Z, \omega}$ (modified L-step) and $T_{\bm Z,\bm V, \omega}$ (modified S-step). Note that these spaces are defined solely by the basis sets $X_1,\dots,X_r$ and $W_{m+1},\dots,W_r$ and hence can be defined even if $S$ is not invertible.

\subsubsection*{Modified K-step}\label{sec: modified K-step}

The time dependent solution of \eqref{eq:weak_dlra} restricted to $T_{\bm V,\omega}$ is a function
\[
f_r(t) = \omega \sum_j K_j(t) \otimes V_j.
\]
The test functions take the form
\[
w  = \omega \sum_{i} \psi_i^{(x)} \otimes V_i^{},
\]
where $\psi_1^{(x)},\dots,\psi_r^{(x)} \in \calW^{(x)}$ are arbitrary. Substituting this into \eqref{eq:weak_dlra} gives
\begin{equation*}
\begin{split}
& \big(
	 \omega \sum_{j} \partial_t K_j(t) \otimes V_j, \,
		\sum_{i} \psi_i^{(x)} \otimes V_i \big)_{xv} \\
& \quad + \sum_s
	\Big( v_s \partial_{x_s}  \big( \omega \sum_{j} K_j(t) \otimes V_j \big)
		- E_s \partial_{v_s}  \big( \omega \sum_{j} K_j(t) \otimes V_j \big), \, \sum_{i} \psi_i^{(x)} \otimes V_i^{} \Big)_{xv}
			= 0.
\end{split}
\end{equation*}
Using the orthogonality relations $(\omega V_j,V_i)_v = \delta_{ij}$ in the first term and rearranging, it follows that
\begin{equation} \label{eq:Kindex}
\begin{split}
\sum_i \big(  \partial_t K_i(t) , \, \psi_i^{(x)} \big)_x
	+ \sum_s \Bigg[ &
		\sum_{i} \Big( \sum_j \big(\omega v_s V_j, V_i\big)_v \partial_{x_s} K_j(t), \, \psi_i^{(x)} \Big)_x \\
&+ \sum_{i} \Big( \sum_j -E_s \big(\partial_{v_s} (\omega  V_j), V_i\big)_v K_j(t), \, \psi_i^{(x)} \Big)_x
		\Bigg] = 0.
\end{split}
\end{equation}
As in the standard DLRA integrator, we write this equation in terms of the vector-valued functions $\bm K(t) = \big[ K_j(t) \big]_j$ and $\bm \psi^{(x)} = \big[ \psi_j^{(x)} \big]_j$ in $(\calW^{(x)})^r$. Specifically, defining the matrices
\begin{equation} \label{eq:Axs_Bxs}
\mathsf A^{(x,s)} = \Big[ \big( v_s V_j, V_i \big)_{v,\omega} \Big]_{ij}, \qquad
\mathsf B^{(x,s)} = \Big[  \big(\partial_{v_s}(\omega V_j), V_i \big)_v \Big]_{ij},
\end{equation}
eq.~\eqref{eq:Kindex} can be rewritten as
\begin{equation}\label{eq:Kvector}
\big( \partial_t \bm K(t), \bm \psi^{(x)}\big)_x
	+ \sum_s
		\big( \mathsf A^{(x,s)} \partial_{x_s} \bm K(t) - E_s(\cdot) \mathsf B^{(x,s)} \bm K(t), \, \bm \psi^{(x)}\big)_x
			 = 0 \quad \text{for all } \bm \psi^{(x)} \in (\calW^{(x)})^r,
\end{equation}
which is the weak form of a Friedrichs' system.

\subsubsection*{Modified L-step}\label{sec: modified L step}

For a modified L-step, we first write the initial function $f_r = f_r(t_0) \in \mathcal M_{r,\omega}$ based on~\eqref{eq:QR representation} in the form
\begin{equation*}
 f_r(t_0) =
 	\omega \sum_{ib} S_{ib} X_i \otimes U_b
 	+ \sum_{p} Z_p \otimes \widehat{L}_{\omega,p}(t_0)
\end{equation*}
where we have incorporated the weight function $\omega$ into $\widehat L_{\omega,p}$ as follows:
\[
\widehat{L}_{\omega,p}(t_0) = \omega L_p(t_0) = \omega \sum_q S_{pq} W_q \in \calW^{(v)}.
\]
We remark that due to our particular representation~\eqref{eq:QR representation}, this formula slightly differs from the corresponding formula in~\cite[Eq.~(6)]{einkemmer2023}, where products of $S$ and its transpose are required for forming the new $L_p$ factors.
Recall that  $(U_a,\widehat{L}_{\omega,p}(t_0))_v = 0$ for all $a$ and $p$. The goal is the time evolution of $\widehat{L}_{\omega, p}(t)$ under this orthogonality constraint which will yield
\[
f_r(t) = \omega \sum_{i} S_{ib} X_i \otimes U_b
 	+ \sum_{p} Z_p \otimes \widehat{L}_{\omega,p}(t)
\]
i.e.~with the same ``fixed'' first part. The second sum in the above expression is a function in the subspace $T_{\bm Z,\omega}$ of $T_{f_r(t_0)} \mathcal M_{r,\omega}$ as defined in~\eqref{eq: space Tz}. Hence, equations for the time evolution of $\widehat{L}_{\omega,p} (t)$ are obtained by requiring the weak formulation~\eqref{eq:weak_dlra} only for test functions from $T_{\bm Z,\omega}$, that is, test functions of the form
\[
w = \omega \sum_p Z_p^{} \otimes \widehat{\psi}_p^{(v)}
\]
with $\widehat{\psi}_p^{(v)} \in \calW^{(v)}_{\omega}$ and $(U_a,\widehat{\psi}_p^{(v)} )_{v,\omega} = 0$. Since for such test functions one has $(\partial_t f_r(t),  \frac{1}{\omega} w \big)_{xv} = (\partial_t \sum_{p} Z_p \widehat{L}_{\omega,p}(t),  \frac{1}{\omega} w)_{xv}$, this leads to the formulation
\[
\big( \partial_t \sum_{p} Z_p \widehat{L}_{\omega,p}(t),  \frac{1}{\omega} w \big)_{xv}  + a \big(t, f_r(t),\frac{1}{\omega} \, w \big) = 0
\]
for all $w$ of the given form.

In detail, using orthogonality relations this means
\begin{align*}
&\sum_p \big(  \partial_t \widehat L_{\omega, p}(t), \, \widehat \psi_p^{(v)} \big)_v
	+ \sum_s \Bigg[
		\sum_{pq} \big(
			(-E_s Z_p, Z_q)_x \partial_{v_s} \widehat L_{\omega,p}(t)
			+ (\partial_{x_s} Z_p, Z_q)_x  v_s \widehat L_{\omega,p}(t), \, \widehat \psi_q^{(v)} \big)_v \Bigg]
		\\
&= -\sum_s \Bigg[
	\sum_{iaq} \big(
		(-E_s X_i, Z_q)_x S_{ia} \partial_{v_s}(\omega U_a)
			+ (\partial_{x_s} X_i, Z_q)_x  S_{ia}  v_s \omega U_a , \, \widehat \psi_q^{(v)}
		\big)_v
	  \Bigg].
\end{align*}
Using the vector-valued function
\[
\widehat{\bm L}_\omega(t) = [\widehat L_{\omega, p}(t)]_p, \qquad \widehat{\bm \psi}^{(v)} = \big[ {\widehat \psi_p}^{(v)} \big]_p
\]
in $(\calW^{(v)}_{\omega})^{r-m}$ this can be rewritten as
\begin{equation} \label{eq:Lhat}
\begin{split}
\big( \partial_t \widehat{\bm L}_\omega(t), \widehat{\bm \psi}^{(v)} \big)_v
	+ &\sum_s
		\big( \widehat{\mathsf A}^{(v,s)} \partial_{v_s} \widehat{\bm L}_\omega(t)
			+  v_s  \widehat{\mathsf B}^{(v,s)} \widehat{\bm L}_\omega(t), \, \widehat{\bm \psi}^{(v)}\big)_v \\
= - &\sum_s
		\big( \bar{\mathsf A}^{(v,s)} \bar S \partial_{v_s}( \omega \bm U)
			+  v_s  \bar{\mathsf B}^{(v,s)} \bar S \omega \bm U, \, \widehat{\bm \psi}^{(v)}\big)_v \quad \text{for all } \widehat{\bm \psi}^{(v)} \in (\calW^{(v)}_{\omega})^{r-m},
\end{split}
\end{equation}
with $\bar S = [S_{ia}]_{ia}$, $\bm U = [U_a]_a$, and
\begin{equation} \label{eq:hatbarv}
\widehat{\mathsf A}^{(v,s)} = \Big[\mathsf A^{(v,s)}_{pq}\Big]_{pq}, \quad
	\bar{\mathsf A}^{(v,s)} = \Big[\mathsf A^{(v,s)}_{pj}\Big]_{pj}, \quad
\widehat{\mathsf B}^{(v,s)} = \Big[\mathsf B^{(v,s)}_{pq}\Big]_{pq}, \quad
	\bar{\mathsf B}^{(v,s)} = \Big[\mathsf B^{(v,s)}_{pj}\Big]_{pj},
\end{equation}
where
\begin{equation} \label{eq:Avs_Bvs}
\mathsf A^{(v,s)} = \Big[ \big(-E_s X_j, X_i \big)_x \Big]_{ij}, \qquad
	\mathsf B^{(v,s)} = \Big[  \big(\partial_{x_s} X_j, X_i \big)_x \Big]_{ij}.
\end{equation}
Hence,~\eqref{eq:Lhat} is an \emph{inhomogeneous} Friedrichs' system for $r-m$ functions $\widehat{L}_{\omega,p}(t)$ in $\calW^{(v)}_{\omega}$ to be solved over the time interval $[t_0,t_1]$. By construction, these will satisfy $(\widehat{L}_{\omega,p}(t), U_a)_v = 0$ for all $t \in [t_0,t_1]$. Eventually, we obtain $L_p(t_1) = \omega^{-1} \, \hat L_p(t_1)$. We highlight that~\eqref{eq:Lhat} represents an orthogonal projection of the system onto the space $T_{\bm Z, \omega}$ and therefore differs from the formulation of the modified L-step in~\cite[Eq.~(11)]{einkemmer2021} which requires the inversion of the matrix $S$.

\subsubsection*{Modified S-step}

The formulation of the modified S-step also starts out from the particular representation~\eqref{eq:QR representation}, this time written as
\begin{equation} \label{eq:S_fr}
f_r(t_0) = \omega \sum_{ia} S_{ia} X_i \otimes U_a  + \omega \sum_{pq} S_{pq}(t_0) Z_p \otimes W_q.
\end{equation}
The goal is to evolve only $S_{pq}(t)$ by restricting the weak formulation~\eqref{eq:weak_dlra} to the finite-dimensional subspace $T_{\bm Z, \bm V, \omega}$ in~\eqref{eq:space T_ZV}, which is orthogonal to the first part of the representation~\eqref{eq:S_fr}. Similar to the modified L-step, this leads to an inhomogeneous system. Specifically, writing the test functions as
\[
w = \omega \sum_{p'q'} \Sigma_{p'q'} Z_{p'} \otimes W_{q'},
\]
where $p',q'$ run from $m+1,\dots,r$, testing with $w$ first gives
\begin{equation*}
\begin{split}
& \Big(\omega  \sum_{pq} \partial_t S_{pq}(t) Z_p \otimes  W_q,\, \sum_{p'q'} \Sigma_{p'q'} Z_{p'} \otimes W_{q'} \Big)_{xv} \\
& \quad + \sum_s
	\Big( v_s \partial_{x_s}  \big( \omega \sum_{ij} S_{ij}(t) X_i \otimes V_j \big)
		- E_s \partial_{v_s}  \big( \omega \sum_{ij} S_{ij}(t) X_i \otimes V_j  \big),
			\, \sum_{p'q'} \Sigma_{p'q'} Z_{p'} \otimes W_{q'} \Big)_{xv}	= 0.
\end{split}
\end{equation*}
Using the orthogonality relations $(Z_p,Z_{p'})_x = \delta_{pp'}$ and $(W_q,W_{q'})_{v,\omega} = \delta_{qq'}$ this reduces to
\begin{equation*}
\begin{split}
& \sum_{pq} \partial_t S_{pq}(t) \Sigma_{pq}
+  \sum_s \sum_{\substack{pp'\\qq'}} \Big[
	 {\mathsf B}^{(v,s)}_{p'p} S_{pq}(t) {\mathsf A}^{(x,s)}_{q'q}  +
		{\mathsf A}^{(v,s)}_{p'p} S_{pq}(t) {\mathsf B}^{(x,s)}_{q'q} \Big] \Sigma_{p'q'} \\
&= -  \sum_s \sum_{\substack{ip'\\ bq'}} \Big[
	 {\mathsf B}^{(v,s)}_{p'i} S_{ib}(t) {\mathsf A}^{(x,s)}_{q'b}  +
		{\mathsf A}^{(v,s)}_{p'i} S_{ib}(t) {\mathsf B}^{(x,s)}_{q'b} \Big] \Sigma_{p'q'},
\end{split}
\end{equation*}
where we used the splitting \eqref{eq:S_fr}. Now, since $\Sigma$ is an arbitrary matrix, this gives
\begin{equation}\label{eq:Spartial}
\begin{split}
\partial_t \widehat S(t) &+ \sum_s \Big[
	\widehat{\mathsf B}^{(v,s)} \widehat S(t) \big(\widehat{\mathsf A}^{(x,s)}\big)^T
		+ \widehat{\mathsf A}^{(v,s)} \widehat S(t) \big(\widehat{\mathsf B}^{(x,s)}\big)^T\Big]  \\
&= -\sum_s \Big[
	\bar{\mathsf B}^{(v,s)} \bar S(t) \big(\bar{\mathsf A}^{(x,s)}\big)^T
		+ \bar{\mathsf A}^{(v,s)} \bar S(t) \big(\bar{\mathsf B}^{(x,s)}\big)^T\Big]
\end{split}
\end{equation}
for $\widehat S(t) = \big[S_{pq}(t)\big]_{pq}$ and $\bar S(t) = \big[S_{ia}(t)\big]_{ia}$, where the matrices are submatrices of~\eqref{eq:Axs_Bxs}:
\[
\widehat{\mathsf A}^{(x,s)} = \Big[\mathsf A^{(x,s)}_{pq}\Big]_{pq}, \quad
	\bar{\mathsf A}^{(x,s)} = \Big[\mathsf A^{(x,s)}_{pb}\Big]_{pb}, \quad
\widehat{\mathsf B}^{(x,s)} = \Big[\mathsf B^{(x,s)}_{pq}\Big]_{pq}, \quad
	\bar{\mathsf B}^{(x,s)} = \Big[\mathsf B^{(x,s)}_{pb}\Big]_{pb}.
\]
This is hence an inhomogenous ODE for the submatrix $\widehat S(t)$.

\subsubsection*{A modified projector splitting integrator}\label{sec: Modified projector splitting integrator}

As already mentioned, the tangent space decomposition~\eqref{eq: T_V+T_Z}--\eqref{eq:space T_ZV} suggests a projector splitting integrator based on the modified K-, S- and L-steps, which we present here for completeness. Although numerical experiments show that this scheme technically works (see section~\ref{sec: Landau1d1v}), keeping the functions $U_a$ in the bases by itself might not yet ensure the desired conservation properties. To guarantee this, the rank-adaptive unconventional integrator presented further below is necessary.

Starting from a time point $t_0$ and a current representation of $f_r(t_0)$ in the form~\eqref{eq:QR representation} we denote $V_j^0 = V_j(t_0)$. In particular, $U_a = V^0_a$ for $a=1,\dots,m$ and $W_p^0 = V_p^0$ for $p = m+1,\dots,r$. The numerical computation of $f_r(t_1)$ at a next time step $t_1=t_0 + \Delta t$ consists of three steps:

\begin{enumerate}
\item \label{enum:stepK}
(Modified K-step) Denoting $\bm V^0 = [V_j^0]_{j=1,\dots,r}$ and $\bm K^0 = [K_j(t_0)]_{j=1,\dots,r}$, solve the system~\eqref{eq:weak_dlra} restricted to the subspace $T_{\bm V^0,\omega}$ on the time interval $[t_0,t_1]$ by solving the Friedrichs' system~\eqref{eq:Kvector} with initial condition $\bm K^0$. At time $t_1$, one obtains
\[
\tilde f = \omega \, \sum_j K_j^1 \otimes V_j^0 \in T_{\bm V^0}.
\]
Restore the form~\eqref{eq:QR representation} by finding an orthonormal system $\bm X^1 = [X_1^1, \ldots, X_r^1]$ such that
\[
K_j^1 = \sum_i X_i^1 \tilde S_{ij}
\]
and $\tilde S$ is of the block triangular form~\eqref{eq:triangular S}. Let $Z^1_p = X^1_p$.
\item \label{enum:stepS}
(Modified S-step) Solve the system~\eqref{eq:weak_dlra} restricted to the subspace $T_{\bm Z^1,\bm V^0, \omega}$ ``backward'' on the time interval $[t_0,t_1]$ with initial condition $\tilde f$. This is achieved by solving~\eqref{eq:Spartial} with $\partial_t \widehat{S} - \ldots$ (instead of $\partial_t \widehat{S} + \ldots$)  on $[t_0,t_1]$ with initial value $\tilde S$ from step 1 and corresponding matrices $\widehat{\mathsf{A}}^{(x,s)}$, $\bar{\mathsf{A}}^{(x,s)}$, $\widehat{\mathsf{B}}^{(x,s)}$, $\bar{\mathsf{B}}^{(x,s)}$ constructed from $\bm X^1$ and $\bm V^0$. At time $t_1$ one obtains
\[
\hat f = \omega \, \sum_{ab} \tilde S_{ab} X_a^1 \otimes U_b + \omega \, \sum_{pb} \tilde S_{pb} Z_p^1 \otimes U_b + \omega \, \sum_{pq} \widehat S_{pq} Z_p^1 \otimes W_q^0.
\]
\item \label{enum:stepL}
(Modified L-step)
Solve the system~\eqref{eq:weak_dlra} restricted to the subspace $T_{\bm Z^1,\omega}$ on the time interval $[t_0,t_1]$ with initial condition $\hat f$ by solving the modified L-step~\eqref{eq:Lhat} with the data computed from $\hat f$. At time $t_1$ one obtains
\[
\bar f_r = \omega \, \sum_i X_i^1 \otimes L_i^1 + \omega \, \sum_{p} Z_p^1 \otimes L_p^1
\]
which is also taken as the final solution at time $t_1$ after recomputing it into the form~\eqref{eq:QR representation}.
\end{enumerate}

\subsection{Rank-adaptive unconventional integrator}\label{sec: Modified unconventional integrator}

The rank $r$ in the DLRA model needs to be large enough to ensure accuracy of the solution, but still (if possible) small enough to keep the model efficient. Since determining a suitable value of $r$ is necessary, the missing rank-adaptivity of the projector splitting integrator poses a practical disadvantage. The rank-adaptive integrator proposed in~\cite{CerutiKuschLubich2022} addresses this issue by first computing the K- and L-step independently, and then using basis augmentation in the S-step. It has been adopted to the conservative setup with fixed velocity functions in~\cite{einkemmer2023,einkemmer2025}. The procedure is as~follows.

\begin{enumerate}
\item
(K- and L-steps) Starting from $f_{r^0}(t_0)$ with rank $r^0$ and current basis functions $\bm X^0$ and $\bm V^0$ perform (modified) K- and L-steps independently to obtain new basis functions $\bm K^1$ and $\bm L^1$.
\item
(Basis augmentation) Let $r'= 2r$, $\{\tilde X^1_1,\dots,\tilde X^1_{r'} \}$ be an orthonormal basis (in $L_2(\Omega^{(x)})$) for the augmented set $[\bm X^0, \bm K^1]$, and $\{ \tilde V^1_1,\dots, \tilde V^1_{r'} \}$ an orthonormal basis (in $L_2(\Omega^{(v)};\omega)$) for $[\bm V^0, \bm L^1]$. Then we can represent $f(t_0)$ as
\[
f_r(t_0) = \omega  \sum_{\mu,\nu=1}^{r'} \tilde S^0_{\mu\nu} \tilde X^1_\mu \otimes \tilde V^1_{\nu}.
\]
with a new coefficient matrix $\tilde S^0 = [\tilde S^0_{ij}] \in \mathbb R^{r' \times r'}$. In this way the DLRA representation rank of $f_r$ is increased from $r$ to $r' = 2r$. With a suitable choice of basis, we can also achieve $\tilde V^1_a = U_a$ for $a = 1,\dots,m$ and a block triangular structure like~\eqref{eq:triangular S}--\eqref{eq:QR representation}.
\item
(S-step) Perform an S-step for the matrix $\tilde S$. However, instead of the ``affine'' modified S-step from~\eqref{eq:Spartial}, where only the part $\widehat S$ is evolved, here we need a ``full'' S-step as in the usual unconventional integrator. Also note that in contrast to the projector splitting integrator, the S-step in the unconventional integrator is performed ``forward'', that is, using the formulation as in~\eqref{eq:weakS} with ``$+$'' and test functions of the form
\[
w = \omega  \sum_{\mu,\nu=1}^{r'} \Sigma_{\mu \nu}  \tilde X^1_\mu \otimes\tilde V^1_{\nu}
\]
with an arbitrary matrix $\Sigma \in \mathbb R^{r' \times r'}$.
\end{enumerate}

The idea of basis augmentation is actually quite general. Instead of $\bm K^1$ and $\bm L^1$ one could use other or even more functions for basis enrichment. The above procedure would increase the rank in every time step and needs to be combined with rank truncation. Following~\cite{einkemmer2023} this is achieved as follows.

\begin{enumerate}\setcounter{enumi}{3}
\item (Rank truncation)
Assume the new solution has been computed and is of the form
\begin{align*}
f_{r'}(t_1)
&= \omega \sum_a \left( \sum_{\mu} \tilde X^1_{\mu} \tilde S^1_{\mu a} \right) \otimes U_a + \omega \sum_{\pi = m+1}^{r'} \left(\sum_{\mu} \tilde X^1_{\mu} \tilde S^1_{\mu \pi} \right) \otimes \tilde V^1_{\pi} \\
&= \omega \sum_a \tilde K_a^1 \otimes U_a + \omega \sum_{\pi = m+1}^{r'} \tilde K_{\pi}^1 \otimes \tilde W_{\pi}^{1},
\end{align*}
where the $(U_a, \tilde W_{\pi}^1)_{v,\omega} = 0$, and $r' \ge m$ is the current representation rank. Since the first part contains the fixed functions $U_a$, the rank truncation is performed on the second part only by computing a low-rank approximation
\begin{equation}\label{eq: truncation}
\sum_{\pi = m+1}^{r'} \tilde K_{\pi}^1 \otimes \tilde W_{\pi}^{1} \approx \sum_{p = m+1}^{r^1} K_{p}^1 \otimes W_{p}^{1}
\end{equation}
with $r^1 \le r'$ using singular value decomposition (SVD). Since this will imply $W_{p}^1 \in \spann \{ \tilde W_{m+1}^1, \dots, \tilde W_{r'}^{1} \} $ for all $m+1 \le p \le r^1$, the orthogonality relations $(U_a,W_{p}^1)_{v,\omega} = 0$ are preserved and one proceeds with
\[
f_{r^1}(t_1) \leftarrow \omega \sum_a K_a^1 \otimes U_a + \omega \sum_{p = m+1}^{r^1} K_{p}^1 \otimes W_{p}^{1}
\]
(with $K_a^1 = \tilde K_a^1$). Using QR decomposition of $K_1^1 \dots, K_r^1$, the result can be easily transformed back into one of the other formats such as~\eqref{eq:weighted dlra}.
\end{enumerate}

Note that the low-rank approximation~\eqref{eq: truncation} can either be based on a fixed target rank or a fixed accuracy, which results in different schemes.

\section{Adaptive discontinuous Galerkin discretization}\label{sec: DG discretization}

We now outline our approach to the solution of the Friedrichs' systems~\eqref{eq:Kvector} and~\eqref{eq:Lhat} for the K- and L-step via a DG discretization. We start by presenting the DG method for a general class of Friedrichs' systems, before specifying the discretization steps needed in our particular setting. We then analyze the conservation properties of the scheme and introduce a strategy for mesh adaptivity.

\subsection{Discontinuous Galerkin discretization for Friedrichs' systems}\label{sec: Discontinuous Galerkin discretization for Friedrichs' systems}

We first recall the discretization of Friedrichs' systems using the DG approach. Our formulation closely follows~\cite[Chapter~8]{dolejsi2015}. Other references include~\cite{hesthaven2008} and~\cite{guo2016}. In this work, we restrict ourselves to the case of periodic domains or to problems without boundary (i.e.~on $\mathbb R^d$). In the latter case, however, we assume that the \emph{computational domain} can be taken sufficiently large to fully include the support of the solution in its interior (up to machine precision) and hence can also be treated as a periodic domain numerically. We note that in principle, DG discretizations also work for domains with boundary, but this would require some more careful considerations, which will be left for future work.

Consider a Friedrichs' system
\[
\partial_t \bm  U(t)
	+ \sum_{s} \big[\mathsf A^{(s)} \, \partial_{y_s} \bm U(t) + c_s(\bm x) \mathsf B^{(s)} \, \bm U(t) \big]
		= \bm 0
\]
on an unbounded or periodic domain $\Omega^{(y)} \subset \mathbb R^d$, where $\mathsf A^{(s)} \in \mathbb R^{r \times r}$ is symmetric, $\mathsf B^{(s)} \in \mathbb R^{r\times r}$ and $c_s: \Omega^{(y)} \rightarrow \mathbb R$. Let $\bm U(t) : \Omega^{(y)} \to \mathbb R^r$ be a (continuous) exact solution. Note that the systems~\eqref{eq:Kvector} and~\eqref{eq:Lhat} for the K- and L-step are both of this form and the following considerations will be applied to both of them, essentially by replacing $y$ with $x$ or $v$.

Let $\mathcal T_{h}^{(y)}$ be a (not necessarily conforming) periodic triangulation of $\Omega^{(y)}$ and define the DG discretization space
\[
\calV_{h,p}^{(y)} = \{ U \in L_2(\Omega^{(y)}) \colon U|_T \in P_p(T), \, T \in \mathcal T_{h}^{(y)}\},
\]
where $P_p(T)$ is the space of all polynomials of degree less than or equal to $p$ on $T$. Testing the exact solution $\bm U(t)$ with $\bm \Psi \in (\calV_{h,p}^{(y)})^r$ and integrating by parts gives
\begin{equation}\label{eq: DG formulation}
\begin{split}
&\sum_{T\in \mathcal T_{h}^{(y)}} \Bigg[ \int_T  \partial_t \bm U(t) \cdot \bm \Psi \, \mathrm d \bm y
	- \int_T \sum_s \mathsf A^{(s)} \bm U(t) \cdot \partial_{y_s} \bm \Psi \, \mathrm d \bm y \\
& \quad + \int_{\partial T} \sum_s (n^{(T)}_s \, \mathsf A^{(s)} \cdot \bm U(t)) \cdot \bm \Psi \, \mathrm d S
	+  \int_T \sum_s (c_s(\bm y) \, \mathsf B^{(s)} \bm U(t)) \cdot \bm \Psi \, \mathrm d \bm y \Bigg]= 0.
\end{split}
\end{equation}
Here $\bm n^{(T)} = (n^{(T)}_s)_s$ is the outer normal vector of element $T$. Special attention in the DG discretization is given to the numerical computation of the boundary integrals over $\partial T$, as will be shown next.

Let $\mathcal F_h^{(y)}$ be the set of all $(d-1)$-dimensional faces of elements in $\mathcal T_{h}^{(y)}$. Since we assume that the domain is periodic, all faces are in fact internal faces. For each $e \in \mathcal F_h^{(y)}$ we define a fixed unit normal vector $\bm n^{(e)}$ (the direction can be arbitrary). Taking into account that the domain is periodic, every boundary $\partial T$ of an element essentially decomposes into shared faces with its neighboring elements. For every face $e\in \mathcal F_h^{(y)}$, there exist two elements $T^+$ and $T^-$ such that $e \subset T^+ \cap T^-$. We use the convention that $T^+$ lies in the direction of $\bm n^{(e)}$. Given a vector field $\bm U$ on $\Omega^{(y)}$, one then defines the average and jump of $\bm U$,
\[
\{\bm U\}_e = \frac 1 2 (\bm U_e^+ + \bm U_e^-), \qquad [\bm U]_e = \bm U_e^- - \bm U_e^+,
\]
across the face $e \in \mathcal F_h^{(y)}$, where
\[ 
\bm U_e^\pm(\bm y) = \lim_{\epsilon \rightarrow 0^+} \bm U(\bm y \pm \epsilon \bm n^{(e)}).
\]

Now, again taking periodicity into account, every face $e$ with neighboring elements $T^+$ and $T^-$ appears in precisely two boundary integrals in~\eqref{eq: DG formulation} (for $T^+$ and $T^{-}$). Using $\bm n^{(T^\pm)} = \mp \bm n^{(e)}$, the overall contribution of $e$ in the boundary integrals is therefore
\[
\int_e \sum_s (n^{(T^+)}_s\, \mathsf A^{(s)} \bm U) \cdot \bm \Psi^+_e + (n^{(T^-)}_s \, \mathsf A^{(s)} \bm U) \cdot \bm \Psi^-_e \, \mathrm d S
=\int_e \bm F_e(\bm U) \cdot [\bm \Psi]_e\, \mathrm d S,
\]
where
\[
\bm F_e(\bm U)= \mathsf A_e \cdot \bm U  \quad \text{with} \quad  \mathsf A_e \coloneqq \sum_s n^{(e)}_s \, \mathsf A^{(s)}
\] 
is the so-called flux over the edge $e$. 

For a discontinuous $\bm U_h \in \big(\calV_{h,p}^{(y)}\big)^r$, the flux has to be approximated by a \emph{numerical flux}, that is,
\[
\bm F_e(\bm U_h) \approx \bm F_e^*\big( (\bm U_h)_e^-, (\bm U_h)_e^+).
\]
Common choices are based on the spectral decomposition $\mathsf A_e = Q \Lambda Q^{-1}$, where $\Lambda$ is diagonal. We define its positive and negative part as $\mathsf A_e^\pm = Q \Lambda^\pm Q^{-1}$, where $\Lambda^\pm = \pm \max(\pm \Lambda,0)$. For $0 \le \alpha \le 1 $ we introduce the numerical flux
\begin{equation} \label{eq:flux}
\bm F_e^*\big( (\bm U_h)^-_e, (\bm U_h)^+_e\big) =  \mathsf A_e \{\bm U_h\}_e + \frac{1-\alpha}{2} \,  \lvert \mathsf A_e \rvert \cdot [\bm U_h]_e
\end{equation}
where $\lvert \mathsf A_e \rvert = \mathsf A_e^+ - \mathsf A_e^-$. Choosing $\alpha=0$ recovers the so-called \emph{upwind flux}, while $\alpha=1$ corresponds to the \emph{central flux}.

The resulting semi-discrete Friedrichs' system can finally be written as
\begin{equation*}
\begin{split}
&\sum_{T\in \mathcal T_{h}^{(y)}} \int_{T} \partial_t \bm U_h(t) \cdot \bm \Psi
	+ \sum_s \Big[
	-(\mathsf A^{(s)} \bm U_h(t)) \cdot \partial_{y_s} \bm \Psi_h
		+ (c_s(\bm y) \, \mathsf B^{(s)} \bm U_h(t)) \cdot \bm \Psi_h
		\Big]
	\, \mathrm d \bm y \\
&+ \sum_{e\in\mathcal F_h^{(y)}} \int_e \bm F_e^*\big((\bm U_h(t))_e^-, (\bm U_h(t))_e^+\big) \cdot [\bm \Psi_h]_e \, \mathrm d S
= 0 \quad \text{for all } \bm \Psi_h \in \big(\calV^{(y)}_{h,p}\big)^r.
\end{split}
\end{equation*}

For the time integration, we employ an explicit Euler step with step size $\tau$ for computing a new discrete solution $\bm U_h^{n+1}$ from a given one $\bm U_h^n$, leading to the fully discrete system
\begin{equation} \label{eq:DGfully}
\begin{split}
&\sum_{T\in \mathcal T_{h}^{(y)}} \int_{T} \frac 1 \tau \big(\bm U_h^{n+1} - \bm U_h^n\big) \cdot \bm \Psi
	+ \sum_s \Big[
	-(\mathsf A^{(s)} \bm U_h^n) \cdot \partial_{y_s} \bm \Psi_h
		+ (c_s(\bm y) \, \mathsf B^{(s)} \bm U_h^n) \cdot \bm \Psi_h
		\Big]
	\, \mathrm d \bm y \\
&+ \sum_{e\in\mathcal F_h^{(y)}} \int_e \bm F_e^*\big((\bm U_h^n)_e^-, (\bm U_h^n)_e^+\big) \cdot [\bm \Psi_h]_e \, \mathrm d S
= 0 \quad \text{for all } \bm \Psi_h \in \big(\calV^{(y)}_{h,p}\big)^r.
\end{split}
\end{equation}

Before proceeding, we need to discuss an important aspect regarding the particular system matrices in the Friedrichs' systems~\eqref{eq:Kvector} and~\eqref{eq:Lhat} of the K- and L-step. By~\eqref{eq:Axs_Bxs} and~\eqref{eq:Avs_Bvs}, they arise as Galerkin-type matrices involving derivatives such as
\[
\big(\partial_{v_s}(\omega V_j), V_i \big)_v, \quad
\big(\partial_{x_s} X_j, X_i \big)_x.
\]
However, in the DG setting, these terms are not defined since we are differentiating possibly discontinuous functions (e.g.~$\partial_{x_s} X_j)$ and integrate with another discontinuous function (e.g.~$X_i$). Hence, a reasonable approximation is needed which we will do as follows. For $U_h, V_h \in \calV_{h,p}^{(y)}$ and $s = 1,\dots,d$, we define discrete partial derivatives as linear forms
\begin{equation} \label{eq:dfg}
\big( \hat{\mathrm d}_{y_s} U_h, V_h\big)_y 
\coloneqq \sum_{T\in \mathcal T_{h}^{(y)}}
	 \int_T \partial_{y_s} U_h \, V_h \, \mathrm d \bm y
		- \sum_{e \in \mathcal F_h^{(y)}} \int_e  n_s^{(e)} \, [U_h]_e \, \{ V_h \}_e \, \mathrm d S.
\end{equation}
This definition is a generalization of the distributional derivative of a function $U_h \in \calV_{h,p}^{(y)}$ tested with a smooth and compactly supported function $\varphi \in C_0^\infty(\Omega^{(y)})$:
\[
\int_{\Omega^{(y)}} \partial_{y_s} U_h \, \varphi \, \mathrm d\bm y =
	\sum_{T \in \mathcal T_{h}^{(y)}} \int_T \partial_{y_s} U_h \, \varphi \, \mathrm d \bm y
		- \sum_{e\in \mathcal F_h^{(y)}} \int_e n_s^{(e)} [U_h]_e \, \varphi \, \mathrm d S.
\]
Now, if $\varphi$ itself is discontinuous along the edges $e$, the evaluation of $\varphi$ at an edge is not well-defined. Hence for $\varphi \in \calV_{h,p}^{(y)}$ we take the average value $\{\varphi\}_e$ on the edge, which gives~\eqref{eq:dfg}.

The discrete partial derivatives satisfy a discrete version of an integration by parts formula, which will be important for proving conservation properties in section~\ref{sec: Conservativion of physical invariants}.

\begin{lemma} \label{lemma:dhat}
Let $U_h,V_h \in \calV_{h,p}^{(y)}$ on a periodic domain. Then
\begin{equation*}
\big( \hat{\mathrm d}_{y_s} U_h, V_h \big)_y
= - \sum_{T \in \mathcal T_{h}^{(y)}} \int_T U_h \partial_{y_s} V_h \, \mathrm d \bm y
	+ \sum_{e \in \mathcal F_h^{(y)}} \int_e n_s^{(e)} \{U_h\}_e \, [V_h]_e \, \mathrm d S.
\end{equation*}
\end{lemma}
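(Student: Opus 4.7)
The plan is to start from the definition of $\big( \hat{\mathrm d}_{y_s} U_h, V_h \big)_y$ in~\eqref{eq:dfg} and transform the element-wise volume term $\sum_T \int_T \partial_{y_s} U_h \, V_h \, \mathrm d \bm y$ by the classical (smooth) integration by parts on each element $T$, producing a volume term $-\sum_T \int_T U_h \, \partial_{y_s} V_h \, \mathrm d \bm y$ and boundary contributions $\sum_T \int_{\partial T} n_s^{(T)} U_h V_h \, \mathrm d S$. The bulk of the argument then consists of rewriting these element-boundary integrals in terms of face integrals over $\mathcal F_h^{(y)}$ and comparing them with the jump term already present in~\eqref{eq:dfg}.

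To collect the boundary integrals by faces, I would use the orientation convention fixed in the excerpt: for $e \in \mathcal F_h^{(y)}$ with neighbors $T^\pm$, one has $\bm n^{(T^+)} = -\bm n^{(e)}$ and $\bm n^{(T^-)} = \bm n^{(e)}$, while $(U_h)_e^+$ and $(V_h)_e^+$ are the traces from $T^+$, and similarly for $-$. In the periodic setting every face appears exactly twice in $\sum_T \int_{\partial T}$, so the boundary sum reduces to
\[
\sum_{e \in \mathcal F_h^{(y)}} \int_e n_s^{(e)} \bigl( (U_h)_e^- (V_h)_e^- - (U_h)_e^+ (V_h)_e^+ \bigr) \, \mathrm d S.
\]

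The key algebraic identity I would invoke is the product-of-traces decomposition
\[
a^- b^- - a^+ b^+ = [a]\,\{b\} + \{a\}\,[b],
\]
verified by a direct expansion of the averages and jumps. Applying this pointwise on each face $e$ with $a = U_h$ and $b = V_h$ turns the boundary contribution into
\[
\sum_{e} \int_e n_s^{(e)} \bigl( [U_h]_e \{V_h\}_e + \{U_h\}_e [V_h]_e \bigr) \, \mathrm d S.
\]
Substituting this back into the definition~\eqref{eq:dfg}, the term $\sum_e \int_e n_s^{(e)} [U_h]_e \{V_h\}_e \, \mathrm d S$ cancels exactly against its counterpart in~\eqref{eq:dfg}, leaving precisely the identity claimed in the lemma.

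There is no real obstacle here: the statement is essentially a discrete integration-by-parts formula, and once the orientation conventions for $\bm n^{(e)}$ and the labels $\pm$ are carefully tracked, the proof is a one-line identity. The only point that requires minor bookkeeping is ensuring that the choice of which trace sits on which side of the jump $[U_h]_e = U_h^- - U_h^+$ versus the average $\{V_h\}_e$ is consistent with the sign conventions from the excerpt; this is exactly what makes the $[U_h]\{V_h\}$ term drop out rather than the $\{U_h\}[V_h]$ term, so it determines which of the two ``integration-by-parts'' forms of $\hat{\mathrm d}_{y_s}$ one ends up with.
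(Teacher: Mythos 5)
Your proposal is correct and follows essentially the same route as the paper: element-wise integration by parts, reassembly of the boundary contributions as face integrals of $n_s^{(e)}\big((U_h)_e^-(V_h)_e^- - (U_h)_e^+(V_h)_e^+\big) = n_s^{(e)}[U_h V_h]_e$ using periodicity, and the product identity $[U_hV_h]_e = [U_h]_e\{V_h\}_e + \{U_h\}_e[V_h]_e$ to cancel the jump term in the definition~\eqref{eq:dfg}. Your sign and orientation bookkeeping matches the paper's conventions, so nothing further is needed.
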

\begin{proof}
Inserting the definition~\eqref{eq:dfg} and applying integration by parts on each element $T$ yields
\[
\big( \hat{\mathrm d}_{y_s} U_h, V_h \big)_y
= - \sum_{T\in \mathcal T_{h}^{(y)}} \int_T U_h \partial_{y_s} V_h \, \mathrm d \bm y
	+ \sum_{e \in \mathcal F_h^{(y)}} \int_e n_s^{(e)} [U_h \, V_h]_e \, \mathrm d S 
	- \sum_{e \in \mathcal F_h^{(y)}} \int_e  n_s^{(e)} \, [U_h]_e \, \{ V_h \}_e \, \mathrm d S
\]
due to the periodicity of the triangulation. Noting that
\begin{align*}
[U_h \, V_h]_e - [U_h]_e \, \{V_h\}_e
&= (U_h)_e^- \, (V_h)_e^- - (U_h)_e^+ \, (V_h)_e^+ - \frac 1 2 \big((U_h)_e^- - (U_h)_e^+ \big) \cdot \big((V_h)_e^- + (V_h)_e^+\big) \\
&= \frac 1 2\big( (U_h)_e^- \, (V_h)_e^- - (U_h)_e^- \, (V_h)_e^+ + (U_h)_e^+ \, (V_h)_e^- - (U_h)_e^+ \, (V_h)_e^+ \big) \\
&= \{U_h\}_e \, [V_h]_e
\end{align*}
completes the proof.
\end{proof}

\subsection{Application of DG discretization to DLRA}\label{sec: Application of DG discretization to DLRA}

We now apply the DG method to discretize the equations arising in the K- and L-steps~\eqref{eq:Kvector} and \eqref{eq:Lhat}. We assume the basis functions $X_i$ and $V_j$ are represented in possibly different DG spaces $\calV_{h,p}^{(x)}$ and $\calV_{h,q}^{(v)}$. For notational simplicity we drop the index $h$ for all discretized functions in the following.

The procedure for one time step consists of the following parts: computing the electric field, assembling the system matrices in the Friedrichs' formulation, and solving them. Finally, following the unconventional integrator, the resulting basis functions are augmented, and an S-step is performed, followed by possible rank truncation as explained in section~\ref{sec: Modified unconventional integrator}. We do not present the details for the fixed-rank projector splitting version as they are very similar and even simpler because no basis augmentation and truncation are required.

The electric field is calculated via equation~\eqref{eq:poisson}. The charge density generated by $f_r$ is given by
\[
\rho(t, \bm x) = \sum_{ij} X_i(\bm x) S_{ij} \int_{\Omega^{(v)}} V_j(\bm v) \, \mathrm d \bm v
\]
and is in $\calV_{h,p}^{(x)}$. Hence, the Poisson equation can be solved exactly on the same mesh with continuous functions of degree $p+2$.

We first focus on the K-step~\eqref{eq:Kvector} using the explicit Euler method~\eqref{eq:DGfully}. We assume that the DLRA representation is in the block QR format~\eqref{eq:triangular S}-\eqref{eq:QR representation}. As discussed at the end of the previous subsection, the system matrices $\mathsf B^{(x,s)}$ are not well-defined and need to be approximated. In particular, we define
\begin{equation*} \label{eq:Axs_Bxs_2}
\mathsf A^{(x,s)} = \Big[ \big( v_s V_j^n, V_i^n \big)_{v,\omega} \Big]_{ij}, \qquad
	\mathsf B_h^{(x,s)} = \Big[  \big(\hat{\mathrm d}_{v_s}(\omega V_j^n), V_i^n \big)_{v} \Big]_{ij},
\end{equation*}
using the discrete partial derivatives $\hat{\mathrm d}_{y_s}$, defined in \eqref{eq:dfg}, instead of $\partial_{y_s}$ in $\mathsf B^{(x,s)}$. Hence, the discretized equations for the K-step read
\begin{equation} \label{eq:Kupdate}
\begin{split}
\frac 1 \tau ( \bm K^{n+1} - \bm K^n, \bm \psi)_x 
& + \sum_{T \in \mathcal T_{h}^{(x)}}
		\sum_s \big[
			\int_T -(\mathsf A^{(x,s)} \cdot \bm K^{n}) \cdot \partial_s \bm \psi 
				- E^n_s (\mathsf B_h^{(x,s)} \cdot \bm K^n) \cdot \bm \psi \, \mathrm d \bm x
	\big] \\
&+ \sum_{e \in \mathcal F_h^{(x)}} \int_e \bm F^*_e(\bm K_e^-, \bm K_e^+) \cdot [\bm \psi]_e \, \mathrm d S 
= 0
\quad \text{for all } \bm \psi \in (\calV_{h,p}^{(x)})^r.
\end{split}
\end{equation}

The Friedrichs' system~\eqref{eq:Lhat} for the velocity components is formulated for weighted functions of the form $\widehat{\bm L}_\omega = \omega \bm L$, where the individual components $L_p$ belong to $L_2(\mathbb R^d; \omega)$. This needs to be reflected in the discrete equations. Keeping the notation from the previous subsection, assume we are given a discrete function $L \in \calV_{h,p}^{(v)}$ (representing a component  $L_p$). We then need a discrete representation $\widehat L_\omega \in \calV_{h,p}^{(v)}$ of the $L_2$ function $\omega L$. For this, we compute the $L_2$ projection of $\omega L$ onto $\calV_{h,p}^{(v)}$, which defines a linear operator
\[ 
\mathcal P_{\omega} (L) = \widehat L_\omega \quad \textnormal{such that} \quad
(\widehat L_\omega, \psi)_v = (\omega  L, \psi)_v \quad \text{for all $\psi \in \calV_{h,p}^{(v)}$.}
\]
On $\calV_{h,p}^{(v)}$, the operator $\mathcal P_{\omega}$ is invertible. The inverse operator is hence used as a substitute for the multiplication by $1/\omega$.

As a result, for computing the L-step we first project
\[
\widehat L_{\omega,p}^n = \mathcal P_\omega \left( \sum_q S^n_{pq} W^n_q \right), \qquad \widehat{\bm L}_\omega^n = [\widehat L^n_{\omega,p}]_p,
\]
and then perform an explicit Euler step of the DG discretization
\[
\begin{split}
\frac 1 \tau (\widehat{\bm L}_\omega^{n+1} - \widehat{\bm L}_\omega^{n}, \bm \psi)_v 
& + \sum_{T \in \mathcal T_{h}^{(v)}}
		\sum_s 
			\int_T \big(-\widehat{\mathsf A}^{(v,s)} \cdot \widehat{\bm L}^{n}_\omega\big) \cdot \partial_s \bm \psi 
				+ v_s (\widehat{\mathsf B}_h^{(v,s)} \cdot \widehat{\bm L}^n) \cdot \bm \psi \, \mathrm d \bm v
	 \\
&+ \sum_{e \in \mathcal F_h^{(v)}} \int_e \bm F^*_e\big((\widehat{\bm L}_\omega^n)_e^-, (\widehat{\bm L}_\omega^n)_e^+\big) \cdot [\bm \psi]_e \, \mathrm d S \\
&= -\sum_s
		\big( -\bar{\mathsf A}^{(v,s)} \bar S \partial_{v_s}( \omega \bm U)
			+  v_s  \bar{\mathsf B}_h^{(v,s)} \bar S \omega \bm U, \, \widehat{\bm \psi}^{(v)}\big)_v, 
\end{split}
\]
where
\[
\mathsf A^{(v,s)} = \Big[ \big(-E_s X_j^n, X_i^n \big)_x \Big]_{ij}, \qquad
	\mathsf B_h^{(v,s)} = \Big[  \big( \hat{\mathrm d}_{x_s}  X_j^n, X_i^n \big)_{x} \Big]_{ij}.
\]
and its parts are defined as in~\eqref{eq:hatbarv}. The L-step is completed by applying the inverse of the projector to calculate
\[ 
L_p^{n+1} = \mathcal P_\omega^{-1}\big(\widehat{L}_{\omega, p}^{n+1}\big).
\]

Finally, for the S-step we use the unconventional integrator with an augmented basis. We define orthonormal bases $\tilde{\bm X}^{n+1} \in (\calV_{h,p}^{(x)})^{2r}$ and $\tilde{\bm V}^{n+1} \in (\calV_{h,p}^{(v)})^{2r - m}$ for the augmented matrices $[ \bm X^n, \bm K^{n+1}]$ and $[\bm V^n, \bm L^{n+1}]$, respectively. Here, $\bm L^{n+1}$ contains the new basis functions $L^{n+1}_p$ from the L-step. The orthonormal basis $\tilde{\bm V}^{n+1}$ is always chosen with respect to the $(\cdot,\cdot)_{v,\omega}$ inner product and such that the first $m$ functions remain the fixed $U_a$.

For expressing functions with respect to these new bases we let
\[ 
M_{ki} = (\tilde X^{n+1}_k, X^n_i)_x, \quad N_{\ell j} = (\tilde V^{n+1}_\ell, V^n_j)_{v,\omega}
\]
with $k$ running from $1,\dots,2r$ and $\ell$ from $1,\dots,2r-m$. It follows that the state at the previous ($n$-th) step can be expressed in the new basis as
\begin{equation} \label{eq:Sn}
\sum_{ij} S_{ij}^n X_i^n \otimes V_j^n = \sum_{k\ell} \tilde S^{n}_{k\ell} \tilde X_k^{n+1} \otimes \tilde V_\ell^{n+1}, \quad
\tilde S^{n}_{k\ell} = \sum_{ij} M_{ki} S_{ij}^{n} N_{\ell j}.
\end{equation}
To update $S$ for the function
\[
f(t) = \omega \sum_{k\ell} \tilde S_{k\ell}(t) \tilde X^{n+1}_k \otimes \tilde V^{n+1}_\ell,
\]
we consider the ODE as in~\eqref{eq:weakS} but on the subspace $T_{\tilde{\bm X}^{n+1}, \tilde{\bm V}^{n+1}}$ of the augmented bases. Applying the explicit Euler method for the time discretization and using~\eqref{eq:Sn} then gives
\begin{equation} \label{eq:Skl_step}
\begin{split}
\tilde S_{k\ell}^{n+1} 
=  \tilde S_{k\ell}^{n}
- \tau \sum_{ijs}
	\Big[ 
&		\big( \hat{\mathrm d}_{x_s} X_i^n, \tilde X_k^{n+1} \big)_{x} \, S_{ij}^n  \, (\tilde V_\ell^{n+1}, v_s V_j^n)_{v,\omega} \\
&		+ (\tilde X_k^{n+1}, - E_s X_i^n)_x  \,  S_{ij}^n  \, \big( \hat{\mathrm d}_{v_s} (\omega V_j^n), \tilde V_\ell^{n+1} \big)_{v}
	\Big],
\end{split}
\end{equation}
where as in the K- and L-step above the system matrices employ the discrete derivatives. Once a new $S^{n+1}$ has been computed, the rank truncation procedure is applied as explained in section~\ref{sec: Modified unconventional integrator}.

\subsection{Conservation of mass and momentum}\label{sec: Conservativion of physical invariants}

In this section, we establish the mass and momentum conservation properties of the unconventional integrator applied to the DG discretization when using central fluxes. Specifically, following~\cite{einkemmer2025,einkemmer2023}, we prove that the numerical scheme satisfies discrete continuity relations consistent with the analytic continuity equations. As a weight function $\omega(\bm v)$, we use the Gaussian weight~\eqref{eq: Gauss weight}, although generalizations are possible.

For a density function $f$ the mass density (or charge density) and the momentum density (or current density) are defined as
\[
\rho(t,\bm x)
	= \int_{\Omega^{(v)}} f(t, \bm x, \bm v) \, \mathrm d \bm v, \qquad
\bm j(t,\bm x)
	= \int_{\Omega^{(v)}} \bm v \, f(t,\bm x, \bm v) \, \mathrm d \bm v.
\]
For the continuous solution of the Vlasov--Poisson equation~\eqref{eq:transport}, these functions satisfy the continuity equations
\begin{equation}\label{eq: continuity rho}
\partial_t \rho(t) + \nabla_x \cdot \bm j(t) = 0
\end{equation}
and, for $s=1,\dots,d$,
\begin{equation}\label{eq: continuity j}
\partial_t j_s(t) + \nabla_x \cdot \bm \sigma_s(t) = -E_s(t) \rho(t), \qquad \bm \sigma_s (t,\bm x) = \Big[ \int_{\Omega^{(v)}} v_s v_{s'} \, f(t,\bm x, \bm v) \, \mathrm d \bm v \Big]_{s'=1,\dots,d}.
\end{equation}
These equations, in particular, imply mass and momentum conservation over time. 

In the numerical scheme from section~\ref{sec: Application of DG discretization to DLRA}, instead of $\rho$ and $j_s$, discrete quantities $\rho^n$ and $j^n_s$ are computed for $n=0,1,2\dots,$ from the discrete bases $\bm X^n, \bm V^n$ and coefficient matrices $S^n$. The goal is to establish local continuity relations for the discrete quantities. For this we adapt the approach recently presented in~\cite{einkemmer2025}.

\begin{lemma} \label{lemma:fhat}
For $f^n = \omega \sum_{ij} S^n_{ij} X^n_i \otimes  V^n_j$ define
\begin{equation*} \label{eq:Fhat}
	\hat F(f^n) := \sum_{s} -v_s \, \hat{\mathrm d}_{x_s} f^n + E^n_s \, \hat{\mathrm d}_{v_s} f^n.
\end{equation*}
Then, using the central flux, i.e.,~$\alpha = 1$ in \eqref{eq:flux}, the K-step \eqref{eq:Kupdate} can be written as
\begin{equation} \label{eq:Kupdate_Fhat}
	(K^{n+1}_j, \psi)_x = (K^n_j, \psi)_x + \tau \big(\hat F(f^n), \psi \otimes V^n_j\big)_{xv}, \quad
	j=1,\ldots,r,
\end{equation}
for all $\psi \in \mathcal V^{(x)}_{h,p}$. We also have
\begin{equation} \label{eq:Supdate_Fhat}
	\tilde S^{n+1}_{k\ell} = \tilde S^{n}_{k\ell} 
		+ \tau \big( \hat F(f^n), \tilde X^{n+1}_k \otimes \tilde V^{n+1}_\ell \big)_{xv} 
\end{equation}
for the S-step \eqref{eq:Skl_step}.
\end{lemma}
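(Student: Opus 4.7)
The plan is to verify both identities by unfolding each side according to the tensor-product structure of $f^n = \omega \sum_{ij} S^n_{ij} X^n_i \otimes V^n_j$ together with the definition~\eqref{eq:dfg} of the discrete derivatives. The key observation is that, when paired with a tensor-product test function $\psi_x \otimes \psi_v$, $\hat{\mathrm d}_{x_s}$ acts only on the $X^n_i$-factor and $\hat{\mathrm d}_{v_s}$ only on the $\omega V^n_j$-factor, so that
\[
(-v_s \hat{\mathrm d}_{x_s} f^n, \psi_x \otimes \psi_v)_{xv}
= -\sum_{ij} S^n_{ij}\, (\hat{\mathrm d}_{x_s} X^n_i, \psi_x)_x\, (v_s V^n_j, \psi_v)_{v,\omega},
\]
and an analogous identity holds for the $\hat{\mathrm d}_{v_s}$-term, with the weight $\omega$ absorbed into the $L_2$ inner product.

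For~\eqref{eq:Kupdate_Fhat}, I would restrict~\eqref{eq:Kupdate} to test vectors $\bm\psi$ whose only nonzero entry is some $\psi \in \calV^{(x)}_{h,p}$ in position $j$. With $\alpha = 1$ the interface contribution of~\eqref{eq:flux} becomes $\sum_{e} \int_e \sum_s n^{(e)}_s \sum_k \mathsf A^{(x,s)}_{jk} \{K^n_k\}_e [\psi]_e \, \mathrm d S$, and combined with the volume term $-\sum_T \int_T \sum_k \mathsf A^{(x,s)}_{jk} K^n_k\, \partial_{x_s}\psi \, \mathrm d\bm x$ it collapses to $\sum_s \sum_k \mathsf A^{(x,s)}_{jk} (\hat{\mathrm d}_{x_s} K^n_k, \psi)_x$ by Lemma~\ref{lemma:dhat} applied with $U_h = K^n_k$, $V_h = \psi$. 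Rearranging~\eqref{eq:Kupdate} then expresses $(K^{n+1}_j - K^n_j, \psi)_x/\tau$ as the sum of $-\sum_s \sum_k \mathsf A^{(x,s)}_{jk} (\hat{\mathrm d}_{x_s} K^n_k, \psi)_x$ and the $\mathsf B^{(x,s)}_h$-contribution $\sum_s (E^n_s (\mathsf B^{(x,s)}_h \bm K^n)_j, \psi)_x$. I would then expand the right-hand side $(\hat F(f^n), \psi \otimes V^n_j)_{xv}$ through the tensor identity above, use the identifications $(v_s V^n_k, V^n_j)_{v,\omega} = \mathsf A^{(x,s)}_{jk}$ and $(\hat{\mathrm d}_{v_s}(\omega V^n_k), V^n_j)_v = (\mathsf B^{(x,s)}_h)_{jk}$, and observe that the two expressions agree term by term.

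For~\eqref{eq:Supdate_Fhat} no flux reduction is required, since~\eqref{eq:Skl_step} is already purely algebraic in the discrete-derivative quantities. Expanding $\tau\,(\hat F(f^n), \tilde X^{n+1}_k \otimes \tilde V^{n+1}_\ell)_{xv}$ via the tensor identity above produces, up to the symmetry of the $x$- and $v$-inner products, exactly the two sums on the right of~\eqref{eq:Skl_step}, and the identity follows by direct comparison. The single genuine obstacle therefore lies in the K-step: it is precisely the step of merging the volume term with the boundary flux into $(\hat{\mathrm d}_{x_s} K^n_k, \psi)_x$ that requires the central-flux hypothesis $\alpha = 1$, since the upwind correction $\tfrac{1-\alpha}{2} \lvert \mathsf A_e \rvert \cdot [\bm K^n]_e$ would contribute extra jump terms with no counterpart in the symmetric form of $\hat{\mathrm d}_{x_s}$.
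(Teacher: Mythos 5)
Your proposal is correct and follows essentially the same route as the paper: both rest on the tensor-product factorization of $\hat F(f^n)$ against test functions $\psi\otimes V^n_j$ (identifying the resulting coefficients with $\mathsf A^{(x,s)}$ and $\mathsf B^{(x,s)}_h$) together with Lemma~\ref{lemma:dhat} to merge the volume and central-flux boundary terms into $(\hat{\mathrm d}_{x_s}K^n_k,\psi)_x$, with the S-step identity being immediate. The only difference is the direction of the computation (you reduce~\eqref{eq:Kupdate} to the $\hat{\mathrm d}$-form, the paper expands $(\hat F(f^n),\psi\otimes V^n_j)_{xv}$ and recovers~\eqref{eq:Kupdate}), which is immaterial.
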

\begin{proof}
For the first equation, recall that $f^n = \omega \sum_j K^n_j \otimes V^n_j$, which gives
\begin{align*}
	\big(\hat F(f^n), \psi \otimes V^n_k\big)_{xv}
	&= \sum_s \sum_j 
		\big( -v_s \omega V_j^n, V^n_k \big)_v \cdot 
			\big( \hat{\mathrm d}_{x_s} K^n_j, \psi\big)_x
		+ \big(  \hat{\mathrm d}_{v_s}(\omega V_j^n), V^n_k \big)_v \cdot 
			\big( E^n_s K^n_j, \psi\big)_x \\
	&= \sum_s \sum_j -\mathsf A^{(x,s)}_{kj} \, \big( \hat{\mathrm d}_{x_s} K^n_j, \psi\big)_x
		- \mathsf B^{(x,s)}_{h,kj} \, 	\big( E^n_s K^n_j, \psi\big)_x\, . 
\end{align*}
Now, Lemma \ref{lemma:dhat} implies
\[
\big( \hat{\mathrm d}_{x_s} K^n_j, \psi \big)_x
= - \sum_{T \in \mathcal T_{h}^{(x)}} \int_T K^n_j \partial_{x_s} \psi \, \mathrm d \bm x
	+ \sum_{e \in \mathcal F_h^{(x)}} \int_e n_s^{(e)} \{K^n_j\}_e \, [\psi]_e \, \mathrm d S.
\]
Plugging this into the previous equation yields shows that~\eqref{eq:Kupdate_Fhat} is just~\eqref{eq:Kupdate} in component form for central fluxes with $\alpha = 1$.

The second equation is immediate from~\eqref{eq:Skl_step} and the representation of $f^n$.
\end{proof}

Next note that both the discrete density and momentum density can be written in the form \(\varphi_U^n = \big( f^n, U \big)_v\) for appropriate functions $U$: for the density $\rho$ we take $U = 1$ whereas for $j_s$ we take $U = v_s$. By construction, all these functions are elements of the subspace $\spann(U_a)$ spanned by the fixed velocity functions, see section~\ref{sec: Modified DLRA model}. We now derive a discrete equation of motion for $\varphi_U^n$  which eventually will eventually lead to discrete versions of the continuity equations~\eqref{eq: continuity rho} and~\eqref{eq: continuity j}.

\begin{lemma} \label{lemma:phiU}
Let $U \in \spann(U_a)$ and define
\[
	\varphi_U^n = \big( f^n, U \big)_v \in \mathcal V^{(x)}_{h,p}.
\]
Then, these quantities satisfy the update equation
\begin{equation}\label{eq: update equation phi}
	\big(\varphi_U^{n+1},\psi\big)_x 
		= \big(\varphi_U^n, \psi\big)_x 
			+ \tau \Big( \big(\hat F(f^n), U \big)_{v}, \psi\Big)_x \quad
			\text{for all } \psi \in V^{(x)}_{h,p}
\end{equation}
when the algorithm described in section~\ref{sec: Application of DG discretization to DLRA} is applied with central fluxes.
\end{lemma}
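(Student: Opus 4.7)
My approach is to reduce the desired identity \eqref{eq: update equation phi} to the K-step update \eqref{eq:Kupdate_Fhat} from Lemma~\ref{lemma:fhat}. Writing $U = \sum_{a=1}^{m} c_a U_a$ with coefficients $c_a = (U_a, U)_{v,\omega}$ and exploiting $V^n_a = U_a$ together with the orthonormality of the velocity basis in $(\cdot,\cdot)_{v,\omega}$, a direct expansion of $f^n = \omega \sum_{ij} S^n_{ij} X^n_i \otimes V^n_j$ gives
\[
 \varphi_U^n = \sum_{ij} S^n_{ij} (V^n_j, U)_{v,\omega}\, X^n_i = \sum_{a=1}^{m} c_a K^n_a, \qquad K^n_a = \sum_i S^n_{ia} X^n_i.
\]
If I can establish the analogous identity $\varphi_U^{n+1} = \sum_{a=1}^{m} c_a K^{n+1}_a$ with $K^{n+1}_a$ being precisely the K-step output from \eqref{eq:Kupdate_Fhat}, then the claim follows immediately by multiplying \eqref{eq:Kupdate_Fhat} for $j = a$ by $c_a$ and summing over $a \le m$, since by linearity of $(\hat F(f^n), \cdot)_{xv}$ in its second argument one has $\sum_a c_a (\hat F(f^n), \psi \otimes U_a)_{xv} = (\hat F(f^n), \psi \otimes U)_{xv}$.

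\textbf{Tracking $(f, U)_v$ through the remaining steps.} I would verify $\varphi_U^{n+1} = \sum_a c_a K^{n+1}_a$ by following the state through the L-, S-, and truncation steps described in sections~\ref{sec: Modified unconventional integrator} and~\ref{sec: Application of DG discretization to DLRA}. The L-step only updates $\widehat{\bm L}_\omega$, i.e.~the $W_p$-part with $p > m$, and leaves the $U_a$-components of the representation untouched. For the S-step, the basis augmentation is arranged so that $\tilde V^{n+1}_a = U_a = V^n_a$ for $a \le m$; the change-of-basis formula \eqref{eq:Sn} then gives $N_{aj} = \delta_{aj}$ and hence
\[
 \tilde S^n_{ka} = \sum_i M_{ki} S^n_{ia} = (\tilde X^{n+1}_k, K^n_a)_x.
\]
Comparing \eqref{eq:Supdate_Fhat} at $\ell = a$ with \eqref{eq:Kupdate_Fhat} tested against $\psi = \tilde X^{n+1}_k$ shows that the two $\hat F(f^n)$-contributions agree, so
\[
 \tilde S^{n+1}_{ka} = \tilde S^n_{ka} + \tau \big(\hat F(f^n), \tilde X^{n+1}_k \otimes U_a\big)_{xv} = (\tilde X^{n+1}_k, K^{n+1}_a)_x.
\]
Since $K^{n+1}_a \in \spann\{\bm X^n, \bm K^{n+1}\} = \spann\{\tilde X^{n+1}_k\}$, this identifies $\sum_k \tilde S^{n+1}_{ka} \tilde X^{n+1}_k$ with $K^{n+1}_a$ exactly. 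The rank truncation from section~\ref{sec: Modified unconventional integrator} preserves the $U_a$-part of the representation verbatim: only the second sum $\sum_\pi \tilde K^1_\pi \otimes \tilde W^1_\pi$ is compressed via SVD, and every retained $W^1_p$ lies in $\spann\{\tilde W^1_\pi\}$ and therefore remains orthogonal to the $U_a$ in $(\cdot,\cdot)_{v,\omega}$. Consequently $(f^{n+1}, U)_v = \sum_a c_a K^{n+1}_a$ as desired.

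\textbf{Main obstacle.} The principal difficulty is the S-step bookkeeping: I must verify that $\tilde S^{n+1}_{ka}$ for $a \le m$ precisely reproduces the coordinates of $K^{n+1}_a$ in the augmented spatial basis. This cancellation relies critically on both update formulas in Lemma~\ref{lemma:fhat} being driven by the same $\hat F(f^n)$-term as soon as $\tilde V^{n+1}_a = U_a$, which is why the structural property that the basis augmentation preserves the fixed velocity functions is essential for the conservation mechanism to work. Once this identification is in place, the contributions of the L-step and the rank truncation to $(f,U)_v$ are easily seen to vanish, and the result drops out of the K-step equation.
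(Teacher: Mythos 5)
Your proposal is correct, and it rests on exactly the same ingredients as the paper's proof: both equations of Lemma~\ref{lemma:fhat}, the expansion of $U$ in the velocity basis (your $U=\sum_a c_a U_a$ is the paper's $U=\sum_j (V_j^n,U)_{v,\omega}V_j^n$, since the coefficients vanish for $j>m$), the fact that the augmentation keeps $\tilde V^{n+1}_a=U_a$, and the crucial inclusion $K^{n+1}_a\in\spann(\tilde{\bm X}^{n+1})$. The difference is one of bookkeeping: the paper shows that the $L_2$ projection $P$ of $\bigl(\hat F(f^n),U\bigr)_v$ onto $\calV^{(x)}_{h,p}$ already lies in $\spann(\tilde{\bm X}^{n+1})$ and then reads off~\eqref{eq: update equation phi} from the S-step, whereas you prove the more concrete state identity $\varphi_U^{n+1}=\sum_a c_a K^{n+1}_a$ by checking that the columns $\tilde S^{n+1}_{ka}$, $a\le m$, reproduce exactly the coordinates of the K-step output $K^{n+1}_a$ in the augmented basis. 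Your version makes the conservation mechanism slightly more transparent (the $U_a$-slices of the S-step output \emph{are} the K-step outputs); the paper's version isolates the projection identity~\eqref{eq:Fhat_projection}, which is reused verbatim for both $U=1$ and $U=v_s$ in the two theorems. One phrasing to tighten: in the unconventional integrator the L-step does not modify the state at all (it only supplies candidate basis functions for $\tilde{\bm V}^{n+1}$), so the relevant fact is not that it ``leaves the $U_a$-components untouched'' but that the augmentation retains $\tilde V^{n+1}_a=U_a$ and that the retained $W^1_p$ after truncation stay $(\cdot,\cdot)_{v,\omega}$-orthogonal to the $U_a$ --- both of which you do use correctly.
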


\begin{proof}
Let $P$ denote the orthogonal projection of $\big(\hat F(f^n), U)_v$ onto $\mathcal V^{(x)}_{h,p}$. We first show that $P \in \spann(\tilde{\bm X}^{n+1})$. Recall that
\begin{equation}\label{eq:U_projection}
U = \sum_j \big(V^{n}_j, U\big)_{v,\omega}  V^{n}_j
\end{equation}
since, by construction, $U$ belongs to $\spann({\bm V}^{n})$ and in the algorithm it is always ensured that the $V^n_j$ form an orthonormal system in the weighted $(\cdot, \cdot )_{v, \omega}$ inner product. Therefore, it holds that
\begin{align*}
	\Big( \big(\hat F(f^n), U)_v, \psi \Big)_x
	&= \Big( \sum_j \big(\hat F(f^n), V^n_j\big)_v \, \big(V^n_j, U \big)_{v,\omega},  \psi \Big)_x
		= \sum_j \big( \hat F(f^n), \psi \otimes V^n_j\big)_{xv} \, \big(V^n_j, U \big)_{v,\omega} \\
	&= \Big( \sum_j \frac 1 \tau \big[K^{n+1}_j - K^n_j \big]\, \big(V^n_j, U \big)_{v,\omega}, \psi \Big)_x
\end{align*}
for all $\psi \in \mathcal V^{(x)}_{h,p}$, where we used~\eqref{eq:Kupdate_Fhat}. This shows
\[
 P = \sum_j \frac 1 \tau \big[K^{n+1}_j - K^n_j \big]\, \big(V^n_j, U \big)_{v,\omega}
		\in \spann(\tilde{\bm X}^{n+1})
\]
since $K^{n}_j, K^{n+1}_j \in \spann(\tilde{\bm X}^{n+1})$. As a result, since the $\tilde X_i^{n+1}$ are an orthonormal system by construction, we can write
\[
P = \sum_i \Big( \tilde X^{n+1}_i, \big(\hat F(f^n),U\big)_v\Big)_x \, \tilde X^{n+1}_i
\]
and hence it holds that
\begin{equation}\label{eq:Fhat_projection}
 \Big( \big(\hat F(f^n), U)_v, \psi \Big)_x =
		\Big( \sum_i \Big( \tilde X^{n+1}_i, \big(\hat F(f^n),U\big)_v\Big)_x \, \tilde X^{n+1}_i, \psi \Big)_x
\end{equation}
for all $\psi \in \mathcal V^{(x)}_{h,p}$. We can now prove~\eqref{eq: update equation phi}. Using~\eqref{eq:Supdate_Fhat}, we have
\begin{align*}
	\big( \varphi^{n+1}_U, \psi\big)_x
	& = \Big(
	\big( \omega \sum_{ij} \tilde S^{n+1}_{ij} \, \tilde X^{n+1}_i \otimes \tilde V^{n+1}_j, \, U \big)_v,
	\psi \Big)_x \\
	& = \Big( \sum_{ij}
		\Big( \omega
			\Big[ \tilde S^{n}_{ij}
				+ \tau \big( \hat F(f^n), \tilde X^{n+1}_i \otimes \tilde V^{n+1}_j \big)_{xv}
			\Big]
			\tilde X^{n+1}_i \otimes \tilde V^{n+1}_j, \, U \Big)_v, \psi\Big)_x \\
	& = \big( \varphi^{n}_U, \psi\big)_x
		+ \tau \Big( \sum_{ij} \Big(\omega \big( \hat F(f^n), \tilde X^{n+1}_i \otimes \tilde V^{n+1}_j \big)_{xv}
			\tilde X^{n+1}_i \otimes \tilde V^{n+1}_j, \, U \Big)_v, \psi\Big)_x.
\end{align*}
Due to~\eqref{eq:U_projection} the last expression equals
\[
 \big( \varphi^{n+1}_U, \psi\big)_x = \big( \varphi^{n}_U, \psi\big)_x
		+ \tau \Big( \sum_i \tilde X^{n+1}_i \big(\hat F(f^n), \tilde X^{n+1}_i \otimes U \big)_{xv}, \psi\Big)_x,
\]
which in light of~\eqref{eq:Fhat_projection} shows~\eqref{eq: update equation phi}. Finally, this relation is preserved if the conservative rank truncation~\eqref{eq: truncation} from~\cite{einkemmer2023} is applied.
\end{proof}

Based on Lemma~\ref{lemma:phiU} we now establish the discrete continuity equations.

\begin{theorem}
If $m \ge 1$, the computed discrete mass densities $\rho^n$ fulfill (in exact arithmetic) for all $n=0,1,2,\dots$ the relations
\begin{equation} \label{eq:dg_rho}
\frac 1 \tau (\rho^{n+1} - \rho^n, \psi)_x
 -\sum_{T \in \mathcal T_{h}^{(x)}} \int_{T} \bm j^n \cdot \nabla \psi \, \mathrm d x
 + \sum_{e\in \mathcal F_h^{(x)}} \int_e \big(\bm n^{(e)} \cdot \{ \bm j^n\}_e \big)  \, [\psi]_e \, \mathrm d S
 = 0
\end{equation}
for all $\psi \in \calV_{h,p}^{(x)}$. This corresponds to a DG discretization of the continuity equation~\eqref{eq: continuity rho} using central fluxes and explicit Euler time stepping as in~\eqref{eq:DGfully}. Hence, the discrete local continuity equation holds for the mass density, and the total mass $\int_{\Omega^{(x)}} \rho^n \, \mathrm d \bm x$ is conserved.
\end{theorem}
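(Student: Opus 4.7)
The plan is to invoke Lemma~\ref{lemma:phiU} with the choice $U=1$, which is admissible since $m\ge 1$ guarantees $1\in\spann(U_a)$. This immediately gives $\varphi_U^n = (f^n,1)_v = \rho^n$ and the scalar update
\[
  (\rho^{n+1},\psi)_x = (\rho^n,\psi)_x + \tau\big(( \hat F(f^n), 1)_v, \psi\big)_x
\]
for all $\psi\in\calV^{(x)}_{h,p}$. Everything therefore reduces to identifying the quantity $(\hat F(f^n),1)_v$ with the right-hand side of the desired discrete continuity equation~\eqref{eq:dg_rho}.

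Next I would expand $\hat F(f^n) = \sum_s\big[-v_s\,\hat{\mathrm d}_{x_s} f^n + E_s^n\,\hat{\mathrm d}_{v_s} f^n\big]$ using the representation $f^n = \omega\sum_{ij} S_{ij}^n X_i^n\otimes V_j^n$ and test against $1$ in the $v$-variable. For the electric-field contribution $E_s^n\,\hat{\mathrm d}_{v_s} f^n$, the test function in $v$ is the constant $1$: by Lemma~\ref{lemma:dhat} (or directly from~\eqref{eq:dfg}), pairing any discrete $v$-derivative against a constant produces terms involving $\partial_{v_s}1 = 0$ and jumps $[1]_e=0$, so this contribution vanishes. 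For the streaming part, the computation $\sum_k (V_k^n,1)_{v,\omega}(v_s V_j^n, V_k^n)_{v,\omega} = (v_s V_j^n,1)_{v,\omega}$ combined with $K_j^n = \sum_i X_i^n S_{ij}^n$ produces exactly the component $j_s^n = \sum_j (v_s V_j^n,1)_{v,\omega} K_j^n$, leaving
\[
  \big((\hat F(f^n),1)_v,\psi\big)_x = -\sum_s \big(\hat{\mathrm d}_{x_s} j_s^n,\psi\big)_x.
\]

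Now I apply Lemma~\ref{lemma:dhat} componentwise, with $U_h=j_s^n$ and $V_h=\psi$ (both in $\calV^{(x)}_{h,p}$), to rewrite
\[
 -\big(\hat{\mathrm d}_{x_s} j_s^n,\psi\big)_x = \sum_{T\in\mathcal T_h^{(x)}}\int_T j_s^n\,\partial_{x_s}\psi\,\mathrm d\bm x - \sum_{e\in\mathcal F_h^{(x)}}\int_e n_s^{(e)}\{j_s^n\}_e\,[\psi]_e\,\mathrm dS.
\]
Summing over $s$ and inserting into the Lemma~\ref{lemma:phiU} update for $\rho$ reproduces~\eqref{eq:dg_rho}, which is precisely a DG discretization of $\partial_t\rho + \nabla_x\cdot\bm j = 0$ with central numerical flux and explicit Euler time stepping. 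Finally, total mass conservation is obtained by choosing the constant test function $\psi\equiv 1\in\calV^{(x)}_{h,p}$: the volume term vanishes because $\nabla\psi=0$ and the face terms vanish because $[\psi]_e=0$, leaving $(\rho^{n+1}-\rho^n,1)_x = 0$ and hence $\int_{\Omega^{(x)}}\rho^{n+1}\,\mathrm d\bm x = \int_{\Omega^{(x)}}\rho^n\,\mathrm d\bm x$.

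The only delicate point, and the one I would treat most carefully, is the vanishing of the $E_s^n\,\hat{\mathrm d}_{v_s}f^n$ contribution when tested with $U=1$ in $v$. This hinges on the fact that the constant $1$ lies in $\calV^{(v)}_{h,p}$ so that Lemma~\ref{lemma:dhat} applies cleanly (the boundary terms at infinity are harmless on the periodic computational domain), together with the algebraic identity $\sum_k(V_k^n,1)_{v,\omega}\,\mathsf B^{(x,s)}_{h,kj} = (\hat{\mathrm d}_{v_s}(\omega V_j^n),1)_v = 0$. Everything else is a direct translation of Lemmas~\ref{lemma:fhat}, \ref{lemma:dhat} and~\ref{lemma:phiU}; rank truncation preserves the relation as already noted at the end of the proof of Lemma~\ref{lemma:phiU}.
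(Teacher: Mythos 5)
Your proposal is correct and follows essentially the same route as the paper: apply Lemma~\ref{lemma:phiU} with $U=1$, use Lemma~\ref{lemma:dhat} to show that the $\hat{\mathrm d}_{v_s}$ contribution vanishes when tested against the constant function, identify the remaining streaming term with $-\sum_s(\hat{\mathrm d}_{x_s} j_s^n,\psi)_x$ and convert it via the discrete integration-by-parts formula, then set $\psi\equiv 1$ for global conservation. Your write-up is, if anything, slightly more careful than the paper's (e.g.\ stating the update in tested form and spelling out why the field term drops out).
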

\begin{proof}
Letting $U=1$, so that $\rho = \varphi_U$, and applying Lemma \ref{lemma:phiU}, we obtain
\[
\rho^{n+1} = \rho^n + \tau \Big( \big(\hat F(f^n), 1 \big)_{v}, \psi\Big)_x\,,
\]
with
\begin{align*}
	\Big( \big(\hat F(f^n), 1 \big)_{v}, \psi\Big)_x
	&= \sum_s -\Big(\hat{\mathrm d}_{x_s}\big( v_s, f\big)_v, \psi\Big)_x  
		+ \Big( E_s \big(\hat{\mathrm d}_{v_s} f^n, 1)_v, \psi \Big)_x
\end{align*}
for all $\psi \in \calV_{h,p}^{(x)}$. Lemma~\ref{lemma:dhat} implies
\[ 
\big( \hat{\mathrm d}_{v_s} f^n, 1 \big)_{v} = 0.
\]
Applying the same lemma to the first term gives
\begin{align*}
	\Big( \big(\hat F(f^n), 1 \big)_{v}, \psi\Big)_x
	&= \sum_s 
		\sum_{T \in \mathcal T_{h}^{(x)}} \int_T j_s \partial_{x_s} \psi \, \mathrm d \bm x
		- \sum_{e \in \mathcal F_h^{(x)}} \int_e n_s^{(e)} \{j_s\}_e \, [\psi]_e \, \mathrm d S,
\end{align*}
thereby showing \eqref{eq:dg_rho}. Letting $\psi = 1 \in \calV_{h,p}^{(x)}$ in \eqref{eq:dg_rho} demonstrates conservation of total mass.
\end{proof}

\begin{theorem}
Assume a central flux, i.e.~$\alpha = 1$ in~\eqref{eq:flux}, for computing the K-steps~\eqref{eq:Kupdate}. If $m \ge 1 + d$, then the computed discrete mass densities $j^n_s$ fulfill (in exact arithmetics) for all $n=0,1,2,\dots$ the relations
\begin{equation} \label{eq:jupdate}
\frac 1 \tau (j_s^{n+1} - j_s^n, \psi)_x
 +\sum_{T\in \mathcal T_{h}^{(x)}} \int_T -\bm \sigma_s^n \cdot \nabla \psi + E^n_s \rho^n \psi\, \mathrm d \bm x
 + \sum_{e\in \mathcal F_h^{(x)}} \int_e \big(\bm n^{(e)} \cdot \{ \bm \sigma_s^n\}_e \big)  \, [\psi]_e \, \mathrm d S
 = 0
\end{equation}
for all $\psi \in \calV_{h,p}^{(x)}$. This corresponds to a DG discretization of the continuity equations~\eqref{eq: continuity j} using central fluxes and explicit Euler time stepping as in~\eqref{eq:DGfully}. Hence, the discrete local continuity equation holds for the momentum density and the total momentum $\int_{\Omega^{(x)}} \bm j^n \, \mathrm d \bm x$ is conserved.
\end{theorem}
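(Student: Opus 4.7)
The plan is to mirror the proof of mass conservation, this time taking the test function $U = v_s$, which lies in $\spann(U_a)$ precisely under the hypothesis $m \ge 1+d$. With $\varphi_U^n = j_s^n$, Lemma~\ref{lemma:phiU} already gives the update
\[
 (j_s^{n+1} - j_s^n, \psi)_x = \tau \Big( \big(\hat F(f^n), v_s\big)_v, \psi \Big)_x
\]
for all $\psi \in \calV^{(x)}_{h,p}$, so the work reduces to rewriting $\big(\hat F(f^n), v_s\big)_v$ in the DG form displayed in~\eqref{eq:jupdate}.

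Inserting the definition of $\hat F(f^n)$, I would split the right-hand side into the transport contribution $\sum_{s'} -\big(\hat{\mathrm d}_{x_{s'}} f^n, v_{s'} v_s\big)_{xv}$ and the field contribution $\sum_{s'} \big( E^n_{s'} \hat{\mathrm d}_{v_{s'}} f^n, v_s\big)_{xv}$. Since the weights $v_{s'}v_s$ depend only on $\bm v$, the $x$-derivative $\hat{\mathrm d}_{x_{s'}}$ commutes with integration in $\bm v$, so the transport part equals $-\sum_{s'} \hat{\mathrm d}_{x_{s'}} (\sigma_s^n)_{s'}$ tested against $\psi$. Applying Lemma~\ref{lemma:dhat} element-wise converts this into the volume term $\sum_T \int_T \bm\sigma_s^n \cdot \nabla\psi \, \mathrm d\bm x$ plus the interface term $-\sum_e \int_e \bm n^{(e)} \cdot \{\bm\sigma_s^n\}_e \, [\psi]_e \, \mathrm d S$, matching exactly the corresponding contributions in~\eqref{eq:jupdate}.

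For the field contribution I would again invoke Lemma~\ref{lemma:dhat} in the $\bm v$-variable with $U_h = f^n$ and $V_h = v_s$. Since $v_s$ is a globally smooth polynomial, $[v_s]_e = 0$ on every velocity face and the interface sum drops out entirely; the remaining volume term $-\sum_T \int_T f^n \partial_{v_{s'}} v_s \, \mathrm d\bm v = -\delta_{ss'} \rho^n$ yields $-E^n_s \rho^n$ after summation over $s'$. This is the source term in~\eqref{eq:jupdate}, completing the derivation of the local continuity relation.

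For the global statement, testing~\eqref{eq:jupdate} with $\psi \equiv 1$ kills the surface and volume flux terms and leaves $\frac{1}{\tau}(j_s^{n+1} - j_s^n, 1)_x = -\int_{\Omega^{(x)}} E_s^n \rho^n \, \mathrm d \bm x$. Momentum conservation therefore hinges on $\int_{\Omega^{(x)}} E_s^n \rho^n \, \mathrm d \bm x = 0$. I expect this to be the main subtlety of the argument: it is true in the continuous setting by writing $\rho^n = 1 + \Delta \Phi^n$, using $\bm E^n = -\nabla \Phi^n$, integrating by parts on the periodic domain, and observing that the quadratic term becomes a perfect $x_s$-derivative of $\tfrac12 |\nabla \Phi^n|^2$. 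To transfer this to the discrete scheme I would rely on the fact, stated in section~\ref{sec: Application of DG discretization to DLRA}, that $\rho^n \in \calV_{h,p}^{(x)}$ and the Poisson equation is solved exactly with continuous elements of degree $p+2$, so $\Phi^n$ is smooth enough in the interior of each element and globally $C^0$ for integration by parts over the periodic mesh to go through without boundary contributions.
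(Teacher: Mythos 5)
Your proposal is correct and follows essentially the same route as the paper: reduce to Lemma~\ref{lemma:phiU} with $U = v_s$, apply Lemma~\ref{lemma:dhat} in $\bm x$ to produce the $\bm\sigma_s^n$ volume and interface terms and in $\bm v$ (where $[v_s]_e = 0$) to produce the source term $-E_s^n\rho^n$, then test with $\psi\equiv 1$. Your treatment of $\int_{\Omega^{(x)}} E_s^n\rho^n\,\mathrm d\bm x = 0$ via $\rho^n = 1+\Delta\Phi^n$ and periodic integration by parts is the same computation as the paper's divergence identity $\bm E(1-\rho)=\nabla\cdot(\bm E\otimes\bm E-\tfrac12\|\bm E\|^2)$, with the exactness of the discrete Poisson solve playing the same role in both arguments.
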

\begin{proof}
Letting $U=v_s$, so that $j_s = \varphi_U$, and applying Lemma \ref{lemma:phiU}, we obtain
\[
	j_s^{n+1} = j_s^n + \tau \Big( \big(\hat F(f^n), v_s \big)_{v}, \psi\Big)_x\,,
\]
with
\begin{align*}
	\Big( \big(\hat F(f^n), v_s \big)_{v}, \psi\Big)_x
	&= \sum_{s'} -\Big(v_{s'} \, \hat{\mathrm d}_{x_{s'}}\big( v_s, f\big)_v, \psi\Big)_x  
		+ \Big( E^n_{s'} \big(\hat{\mathrm d}_{v_{s'}} f^n, v_s)_v, \psi \Big)_x
\end{align*}
for all $\psi \in \calV_{h,p}^{(x)}$. In this case Lemma~\ref{lemma:dhat} yields
\[ 
\big( \hat{\mathrm d}_{v_s'} f^n, v_s \big)_{v} = \delta_{s',s} \rho^n.
\]
Applying the same lemma also to the first term hence gives
\begin{align*}
	\Big( \big(\hat F(f^n), v_s \big)_{v}, \psi\Big)_x
	= \sum_{s} & 
		\sum_{T \in \mathcal T_{h}^{(x)}} \int_T \sum_{s'} \sigma_{s,s'} \partial_{x_s} \psi \, \mathrm d \bm x
		- \sum_{e \in \mathcal F_h^{(x)}} \int_e n_s^{(e)} 
			\big\{\sum_{s'} \sigma_{s,s'}\big\}_e \, [\psi]_e \, \mathrm d S \\
	&+ \big( E^n_s \rho^n, \psi\big)_x \,,
\end{align*}
thereby showing \eqref{eq:jupdate}. For the conservation of momentum, we note that in the continuous setting one has~\cite{einkemmer2023}
\[
\bm E (1-\rho)=\nabla \cdot \left(\bm E \otimes \bm E - \frac{1}{2} \|\bm E\|^2\right),
\]
and hence
\[
\int_{\Omega^{(x)}} \bm E (1-\rho)\, \mathrm d \bm x = 0.
\]
This also holds for the discrete version, since the computed $\bm E^n$ is the exact electric field corresponding to the (discrete) density $\rho^n$; see Section~\ref{sec: Application of DG discretization to DLRA}. Setting $\psi = 1 \in \calV_{h,p}^{(x)}$ in~\eqref{eq:jupdate}, and using the fact that $\int_{\Omega^{(x)}} \bm E \, \mathrm d \bm x = 0$, we conclude that the total momentum is conserved.
\end{proof}

We remark that the key observation for deriving the local continuity equations--and thus the associated conservation properties--is that the updates for $\bm K$ and $S$ can be expressed using $\hat F$ (Lemma~\ref{lemma:fhat}). However, this property holds only for the central flux. For the upwind flux additional jump terms appear and the local continuity equations in the presented form are hence not satisfied when using the upwind flux. Depending on the basis $\tilde{\bm X}^{n+1}$, physical quantities may still be conserved globally. It remains to be investigated whether the algorithm can be modified to also conserve these quantities when employing the upwind flux.

\subsection{Spatial adaptivity}\label{sec: spatial adaptivity}

In this section, we explore the use of adaptive mesh strategies within the DG framework for DLRA. As a demonstration of feasibility, we present a rather simple adaptive scheme and present numerical tests in section~\ref{sec: transport adaptive DG} to illustrate its behavior. The development of more sophisticated schemes and error estimators, like in~\cite{Wieners2023}, is left for future work.

We consider the general setup of section~\ref{sec: DG discretization} (which applies to both the K- and L-step) and employ an elementwise error indicator based on the projection of a DG solution. Specifically, given a DG function $\bm U\in (V_{h,p}^{(y)})^r$ of polynomial order $p$, we project it onto a DG space of order $p-1$ within each element $T \in \mathcal T_{h}^{(y)}$, resulting in a function $\tilde{\bm U} \in V_{h,p-1}^{(y)}$ that satisfies
\[ 
( \tilde{\bm U}, \varphi)_{L_2(\Omega^{(y)})} = (\bm U, \varphi)_{L_2(\Omega^{(y)})} \quad \text{for all } \varphi \in \calV_{h,p-1}^{(y)}.
\]
The local error indicator is then defined as the maximum of the component wise \( L^2 \)-norm of the difference between the original and the projected function, that is,
\[
e^{(T)} = \max_i \, \| \tilde U_i - U_i\|_{L_2(T)}.
\]
This approach provides a measure of local discretization error and can serve as the basis for the adaptive refinement strategy.

Now in order to solve the discretized Friedrichs' system \eqref{eq:DGfully} we first compute the next step $\bm U^{n+1}$ on the current grid and evaluate the error indicators for $\bm U^{n+1}$ as above. We mark an element $T$ for refinement if the error $e^{(T)}$ exceeds a predefined threshold $\epsilon$. Once the marked elements are identified, they are refined accordingly. This refinement process may lead to nonconforming meshes with hanging nodes. However, due to the local nature of the DG method, these nonconformities do not pose a problem, as continuity across element interfaces is not enforced.

After refinement, the current solution $\bm{U}^n$ is interpolated onto the new mesh. The time-stepping procedure is then repeated on the refined mesh to obtain a new $\bm U^{n+1}$, incorporating the newly adapted elements. This process is repeated until no further refinement is triggered.

In order to remove unnecessary refinements and maintain computational efficiency, a coarsening procedure is applied. The same error indicator is used to assess whether refinement is still required. Specifically, all children of a parent element are removed if the sum of their error indicators falls below $c \,\epsilon$, where $0< c < 1$ is a safety factor.

\section{Numerical simulations}\label{sec: numerical simulations}

We present numerical experiments to validate our theoretical findings and evaluate the performance of the proposed methods. We first simulate classical Landau damping in one spatial dimension, verifying the conservation properties and accuracy of the DG discretization. Additionally, we examine our modified projector splitting integrator in this setting. Next, we apply the conservative unconventional integrator to Landau damping in two spatial dimensions, demonstrating its applicability to higher-dimensional problems. Finally, we simulate the free motion of a Maxwellian distribution in two spatial dimensions governed by a linear kinetic equation with a vanishing electric field. This simplified setup enables us to study the impact of both rank and spatial adaptivity within our framework, serving as an initial step toward adaptive low-rank DG methods for kinetic equations.

The implementation is based on the finite element library \texttt{MFEM}~\cite{mfem2021}. All computations were performed on a workstation equipped with dual-socket AMD EPYC 7543 processors and 1~TB~RAM.

\subsection{Landau damping in one spatial dimension}\label{sec: Landau1d1v}

We consider classical Landau damping in one spatial and one velocity dimension over the time interval $[0,40]$. We use periodic domains $\Omega^{(x)} = [0, 4 \pi]$ and $\Omega^{(v)} = [-6, 6]$ with the initial condition
\[
f(0,x,v) = \frac{1}{\sqrt{2\pi}} \mathrm e^{-v^2/2}\, \big( 1 + \alpha \cos(k x)\big), \quad
\alpha = 10^{-2}, \, k = \frac 1 2.
\]
The same setup was investigated in~\cite{einkemmer2023}. In this case, linear analysis predicts that the electric field decays with a rate of $\gamma \approx 0.153$. For the discretization, we use a uniform grid with $n_x=32$ elements in the spatial domain and $n_v=64$ elements in the velocity domain. Quadratic finite elements ($p=2$) are employed, along with a step size $\tau = 10^{-4}$.

\begin{figure}[t]
\begin{center}
\includegraphics[width=0.89\textwidth]{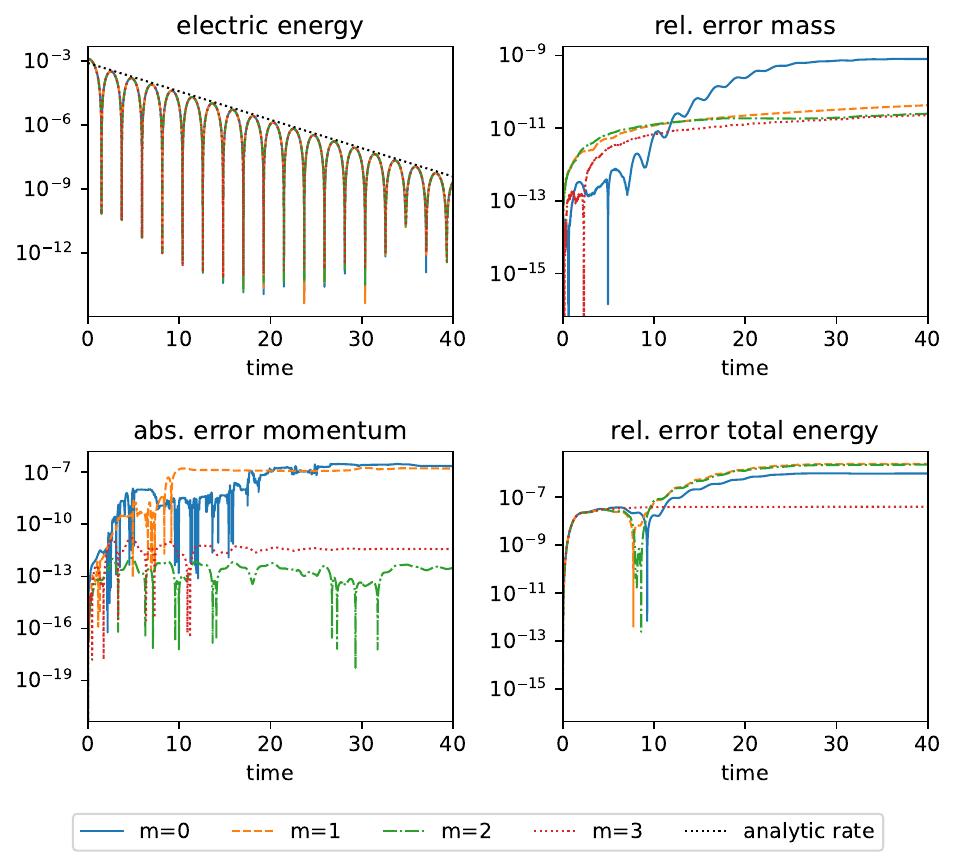}
\end{center}
\caption{Simulation results of 1+1-dimensional linear Landau damping using the unconventional integrator with a maximum rank of $10$, central flux, and different number of fixed velocity basis functions $m$; electric energy including the theoretical decay rate (upper right), relative error of the mass (upper right) and total energy (bottom right), absolute error of the momentum (bottom left) \label{fig:landau1d_bug_fixed_central}}
\end{figure}

The results for the conservative unconventional integrator with truncation to a fixed rank of $10$, shown in Figure~\ref{fig:landau1d_bug_fixed_central}, closely resemble those in~\cite[Fig.~1]{einkemmer2023}. While all configurations ($m=0,1,2,3$) exhibit the expected decay in electric energy, their behavior with respect to physical invariants differs significantly. Including the  constant function $v \mapsto 1$ for $m=1,2,3$ reduces the relative error in mass from approximately  $10^{-9}$ to $5 \cdot 10^{-11}$. The absolute error of the momentum decreases from approximately $10^{-7}$ to $10^{-11}$ when the function $v \mapsto v$ is included ($m=2,3$). Consequently, both quantities can be considered numerically conserved for $m \ge 2$. As argued in~\cite{einkemmer2023} the total energy is not expected to be conserved; however, including $v \mapsto v^2$ reduces the error by approximately two orders of magnitude for $m=3$.

\begin{figure}[t]
\begin{center}
\includegraphics[width=0.89\textwidth]{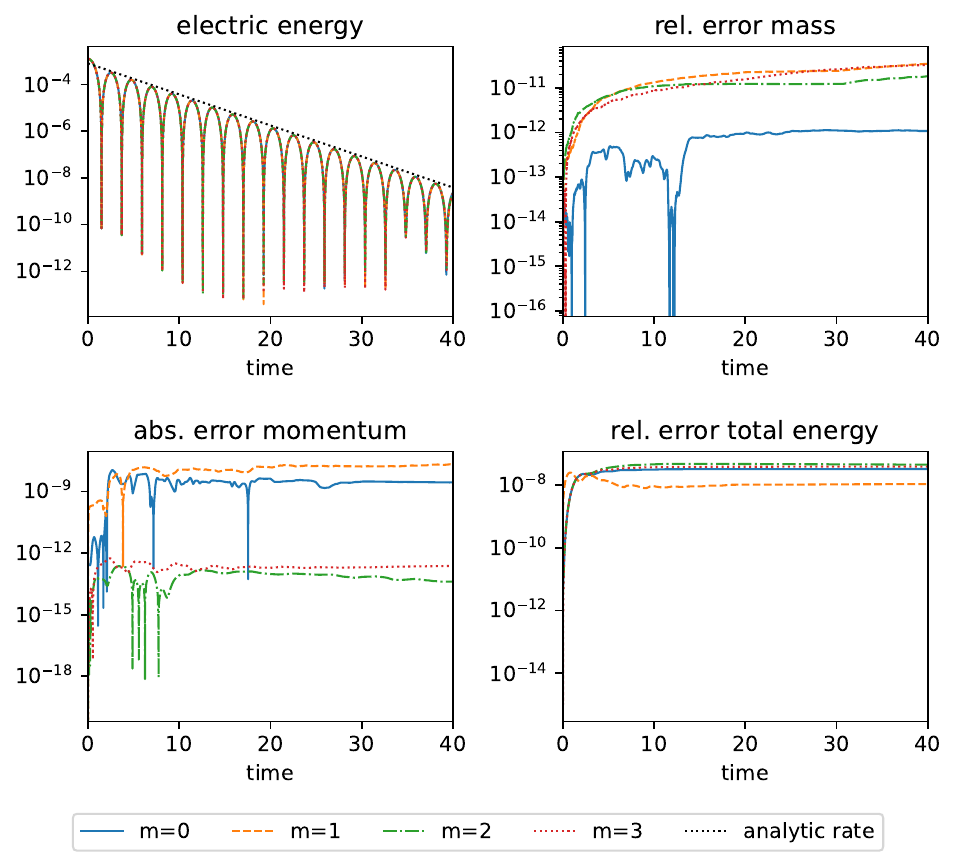}
\end{center}
\caption{Simulation results of 1+1-dimensional linear Landau damping using the modified projector splitting integrator with rank $10$, central flux, and different number of fixed velocity basis functions $m$; electric energy including the theoretical decay rate (upper right), relative error of the mass (upper right) and total energy (bottom right), absolute error of the momentum (bottom left)}
\label{fig:landau1d_projector_central}
\end{figure}

We repeat the experiment using the projector splitting integrator to assess its ability to conserve physical invariants. The results, shown in Figure~\ref{fig:landau1d_projector_central}, confirm that the integrator correctly reproduces the expected decay of electric energy according to the theoretical prediction. However, the conservation of physical invariants yields mixed results. Notably, including the constant function increases the error in mass, whereas incorporating the linear function systematically improves the accuracy of the momentum. The total energy error remains small but exhibits an inconsistent behavior across different configurations.

\begin{figure}[t]
\begin{center}
\includegraphics[width=0.89\textwidth]{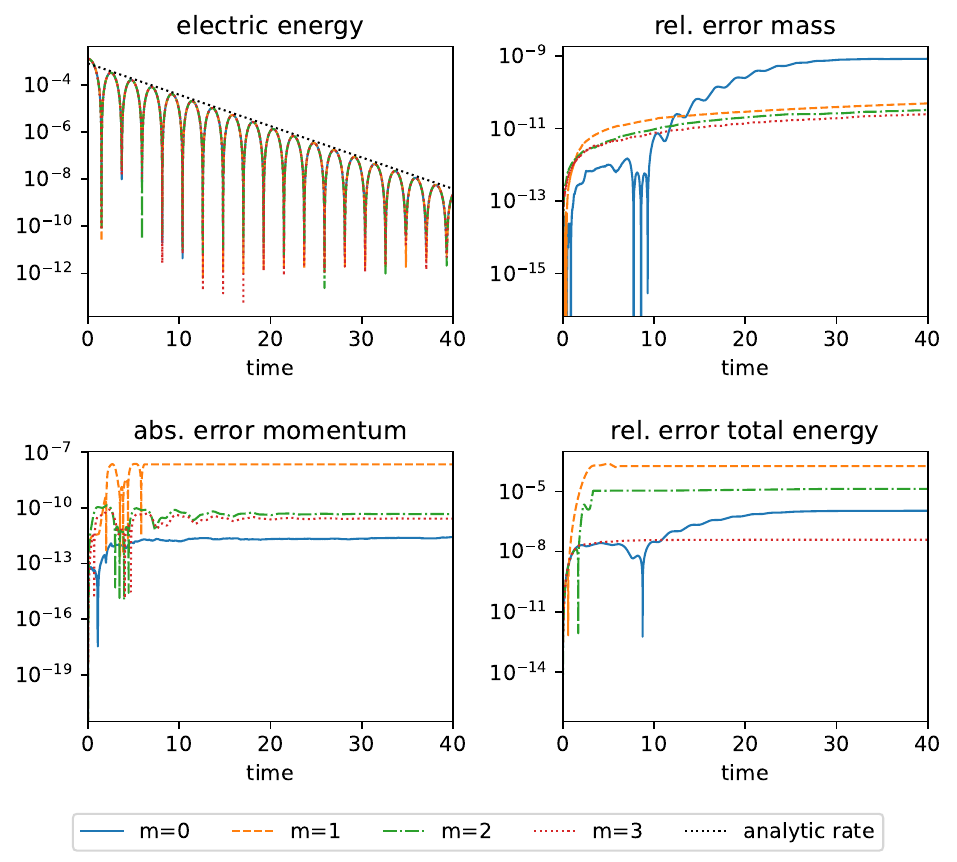}
\end{center}
\caption{Simulation results of 1+1-dimensional linear Landau damping using the rank-adaptive unconventional integrator with a truncation tolerance of $\epsilon = 10^{-7}$, central flux, and different number of fixed velocity basis functions $m$; electric energy including the theoretical decay rate (upper right), relative error of the mass (upper right) and total energy (bottom right), absolute error of the momentum (bottom left) \label{fig:landau1d_bug_adaptive_central}}
\end{figure}

Finally, we also apply the unconventional integrator in its rank-adaptive version, employing an SVD truncation with a Frobenius norm error threshold of $\epsilon = 10^{-7}$. The results, presented in Figure~\ref{fig:landau1d_bug_adaptive_central}, closely resemble those obtained with a fixed rank in Figure~\ref{fig:landau1d_bug_fixed_central}. This similarity is expected, as the maximum selected rank throughout the simulation remained below $10$. However, a notable difference is observed in the momentum error: the best results are achieved when no velocity function is fixed ($m=0$).

\subsection{Landau damping in two spatial dimensions}\label{sec: Landau 2d2v}

We now extend our simulation of Landau damping to two spatial dimensions, demonstrating that the DG-based approach generalizes well to higher dimensions. Specifically, we solve the problem on periodic domains $\Omega^{(x)} = [0, 4 \pi]^2$ and $\Omega^{(v)} = [-6, 6]^2$, using the initial condition
\[
f(0,\bm x,\bm v) = \frac{1}{2\pi} \mathrm e^{-\vert \bm v\vert^2/2}\, \big( 1 + \alpha \cos(k x_1) + \alpha \cos(k x_2)\big), \quad
\alpha = 10^{-2}, \, k = \frac 1 2.
\]

We use a uniform triangulation with $n_x=2\cdot 32^2$ and $n_v=2\cdot64^2$ elements, quadratic finite elements ($p=2$), and a step size $\tau = 10^{-4}$. Results for the rank-adaptive unconventional integrator are shown in Figure~\ref{fig:landau2d_bug_adaptive_central}. Note that in this figure $m=3$ corresponds to momentum conservation, since in the two-dimensional setup two additional velocity functions~$\bm v \mapsto v_s$ ($s=1,2$) must be included. Additionally, we present the norm of the momentum error.

\begin{figure}[t]
\begin{center}
\includegraphics[width=0.89\textwidth]{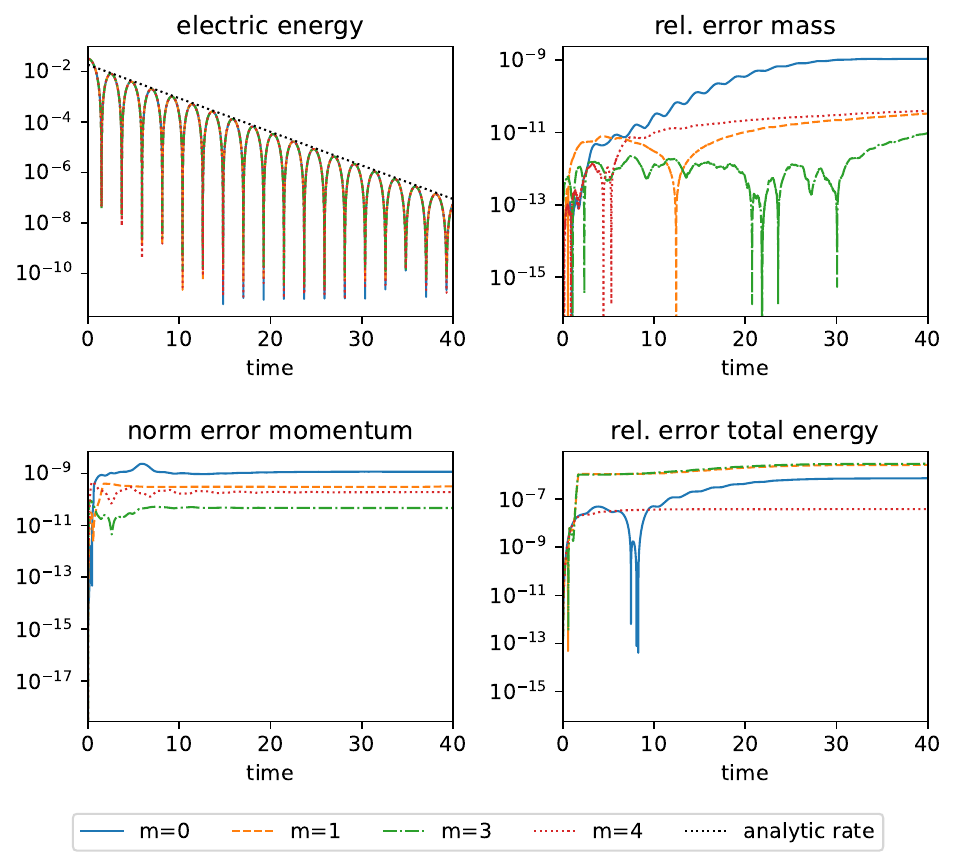}
\end{center}
\caption{Simulation results of 2+2-dimensional linear Landau damping using the rank-adaptive unconventional integrator with a truncation tolerance of $\epsilon = 10^{-7}$, central flux, and different number of fixed velocity basis functions $m$; electric energy including the theoretical decay rate (upper right), relative error of the mass (upper right) and total energy (bottom right), absolute error of the momentum (bottom left) \label{fig:landau2d_bug_adaptive_central}}
\end{figure}

The results are quite similar to the corresponding one-dimensional case in Figure~\ref{fig:landau1d_bug_adaptive_central}. The only notable difference appears in the momentum conservation. Even for $m=1$, the results are nearly as accurate as when including the two additional basis functions.

\subsection{Mesh- and rank-adaptive transport problem}\label{sec: transport adaptive DG}

As a final experiment, we conduct a first test combining spatial adaptivity with a rank-adaptive scheme. Mesh coarsening poses additional challenges regarding conservation properties, which are beyond the scope of this work. Hence, we do not take conservation into account in this experiment. Consequently, we are not fixing any velocity basis functions ($m=0$) and do not use a weight~function.

As a model problem, we consider the Vlasov--Poisson equation~\eqref{eq:transport} with the electric field $\bm E(t,\bm x)=\bm 0$ set to zero at all times on the periodic domain $\Omega = [0,4\pi]^2 \times [-6,6]^2$. The initial condition is chosen as a Maxwellian distribution with a nonzero mean velocity in the $x$-direction:
\[
f(0,\bm x,\bm v) = \frac{1}{2\pi} \mathrm e^{-\vert \bm v- \bm \mu^{(v)}\vert^2/(2\sigma_v^2)}\, \frac{1}{2\pi} \mathrm e^{-\vert \bm x - \bm \mu^{(x)}\vert^2/(2\sigma_x^2)}
\]
with
\[
\sigma_x = 1/2, \quad \bm \mu^{(x)} = [\pi,2\pi]^T, \quad \sigma_v = 1/4, \quad \bm \mu^{(v)} = [\pi,0]^T.
\]
This corresponds to the free motion of the distribution. We simulate the time interval $[0,2]$.

The initial discretization is based on a regular triangulation with $n_x=2 \cdot 16^2$ and $n_v = 2\cdot 32^2$ elements and a time step of $\tau = 5 \cdot 10^{-3}$. We estimate the error by projecting onto discontinuous linear finite elements, as described in section~\ref{sec: spatial adaptivity}. An element is refined if the error exceeds $10^{-3}$ and all children of a parent element are coarsened if the sum of their error indicators falls below $0.15 \cdot 10^{-3}$. The rank adaptive integrator is applied with a truncation tolerance of $10^{-4}$.
The time step $\tau$ itself was fixed to a small enough value to fulfill CFL conditions at all times and for all meshes. In principle the time step could be adapted to the grid or substepping could be applied, but this was not implemented.

Figure~\ref{fig:transport2d2v_mesh} displays the spatial density distribution $\rho(t,\bm x)$ at different times, along with the corresponding adaptive mesh. At $t=0$, the initially coarse, regular mesh (left plot) is refined around the center of the initial state to accurately represent the function with an error below the given tolerance. As the distribution evolves under the rank-adaptive scheme, its center shifts to the right, and it broadens in the spatial domain. The mesh adapts to the changing distribution, refining at the new location while coarsening in regions where the distribution becomes negligible (middle plot $t=1$ and right plot $t=2$). These results demonstrate that the spatial adaptivity is effective in this setting, even with a simple error indicator and refinement strategy.

We also display the number of mesh elements as well as the rank over time in Figure~\ref{fig:transport2d2v_adaptive}. The number of elements in the spatial domain increases only slightly during the simulation, corresponding to the broadening of the spatial distribution. The rank increases since the evolving distribution becomes more and more entangled in phase space.

\begin{figure}[t]
\begin{center}
\includegraphics[width=0.31\textwidth]{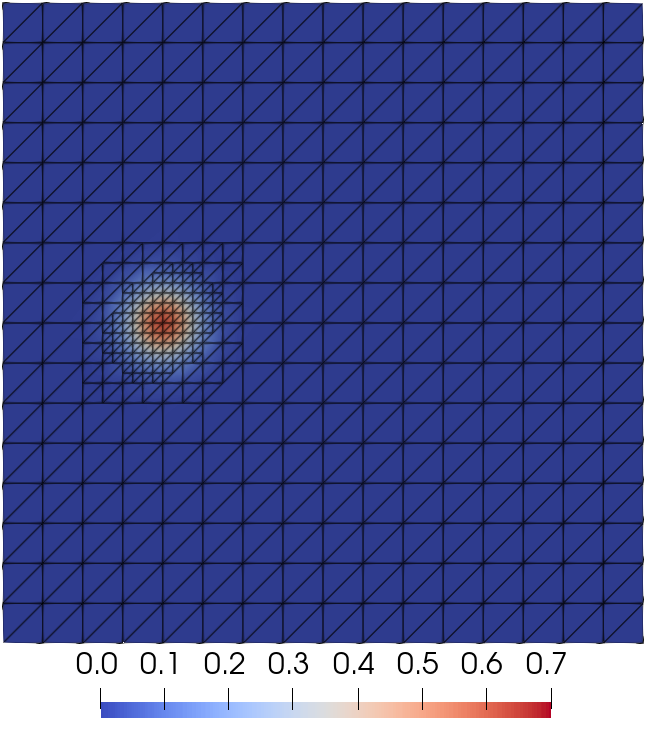} \hfill
\includegraphics[width=0.31\textwidth]{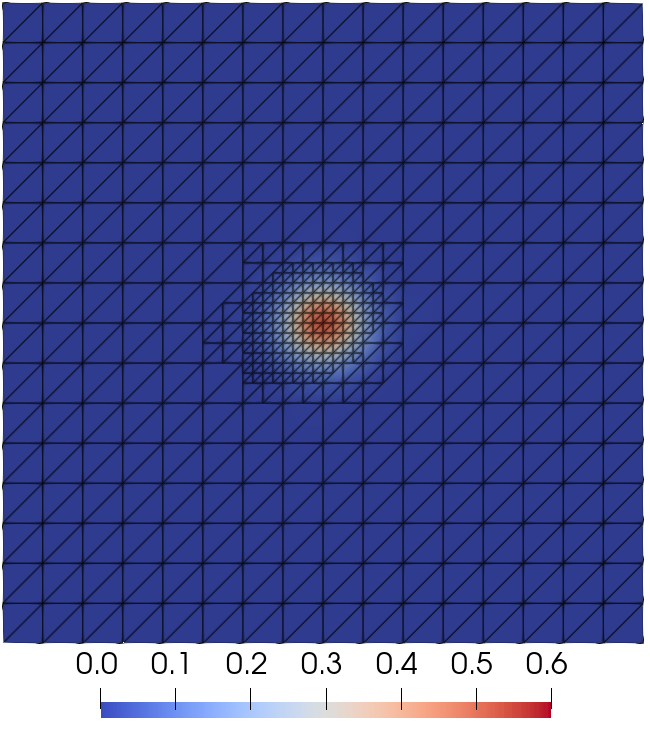} \hfill
\includegraphics[width=0.31\textwidth]{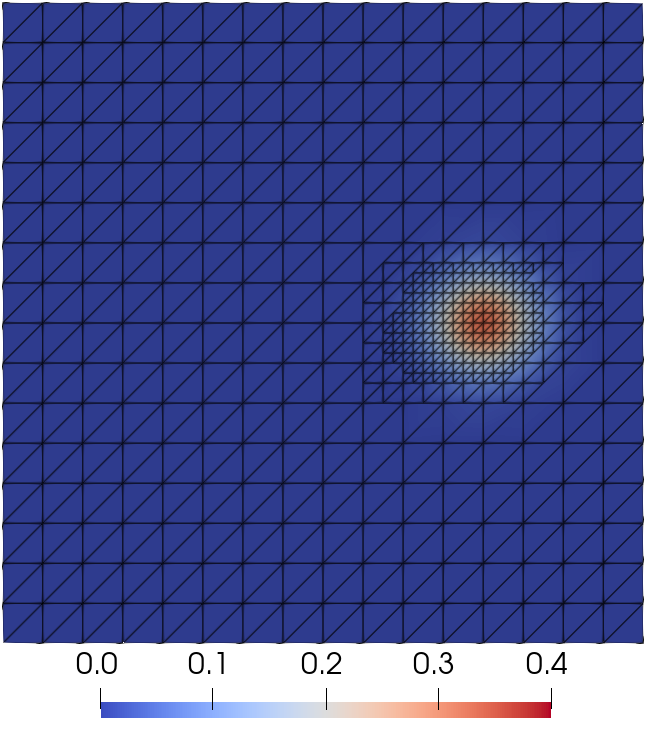}
\end{center}
\caption{Spatial density at $t=0$ (left), $t=1$ (middle), and $t=2$ (right) of the free motion of a Maxwellian distribution along with the dynamically adaptive mesh}
\label{fig:transport2d2v_mesh}
\end{figure}

\begin{figure}[t]
\begin{center}
\includegraphics[width=0.63\textwidth]{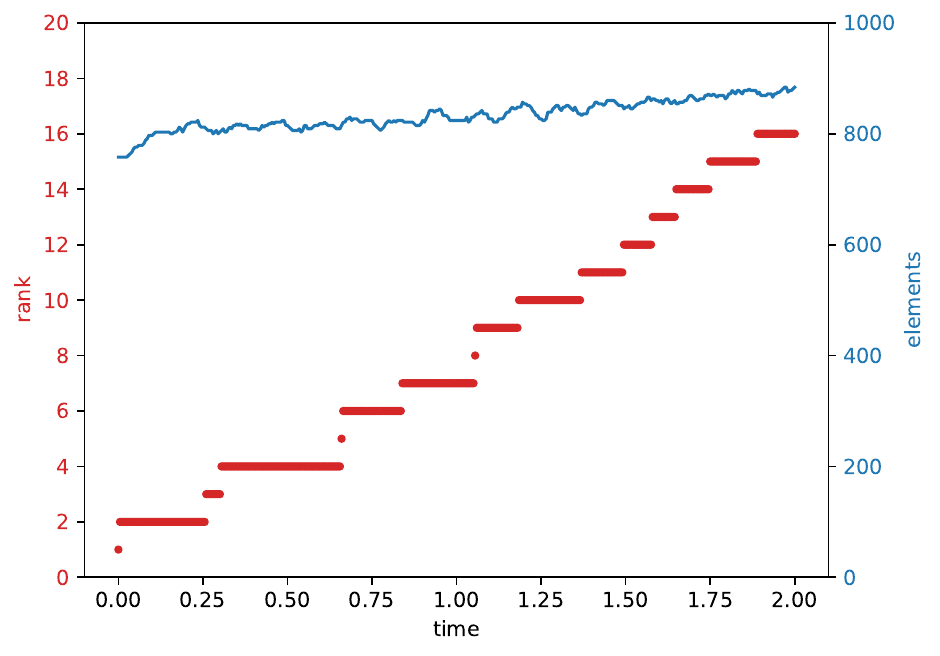}
\end{center}
\caption{Number of mesh elements in the spatial domain (blue) and DLRA rank (red) for the simulation of a free motion of a Maxwellian using a mesh- and rank-adaptive scheme}
\label{fig:transport2d2v_adaptive}
\end{figure}

\section{Conclusion}

We developed and analyzed a conservative DG discretization of the DLRA approach for the numerical solution of the Vlasov--Poisson equation. We presented the conservative integrator from~\cite{einkemmer2021,einkemmer2023,einkemmer2025}, which is based on fixing some velocity basis functions, in a modified form that avoids normal equations in the L-step and suggests a projector splitting scheme. We rigorously derived the DG formulation for the DLRA equations in terms of Friedrichs' systems, including appropriate numerical fluxes and discrete differentiation operators. Combined with the unconventional rank-adaptive integrator, we proved the conservation of mass and momentum in our numerical solver. The numerical experiments demonstrate the feasibility of our approach, confirm the conservation of physical invariants, and highlight the possibility for mesh adaptivity.

Despite these advances, several questions remain open and could be addressed in future work. First, extending the method to non-periodic domains and corresponding boundary conditions requires additional considerations, potentially based on the ideas in~\cite{uschmajew2024}. Second, more sophisticated error estimators and mesh adaptive strategies need to be designed for the DG solvers to maintain the conservation of invariant quantities. Also, higher-order time-stepping schemes, such as those suggested in~\cite{nobile2025a}, could be considered. So far, we have established conservation properties only for the unconventional integrator, but certain quantities may also be conserved under the projector splitting approach, which might merit a more detailed investigation.

\paragraph*{Acknowledgement} 

We would like to thank Michael Dumbser for helpful discussion on the DG method. We are also grateful to the anonymous referees for their valuable feedback which helped to improve the presentation and simplify some of the main results. The work of A.U.~was supported by the Deutsche Forschungsgemeinschaft (DFG, German Research Foundation) – Projektnummer 506561557.

\small

\begin{thebibliography}{10}

\bibitem{Allmann-Rahn2022}
F.~Allmann-Rahn, R.~Grauer, and K.~Kormann.
\newblock A parallel low-rank solver for the six-dimensional {V}lasov-{M}axwell
  equations.
\newblock {\em J. Comput. Phys.}, 469:Paper No. 111562, 18, 2022.

\bibitem{mfem2021}
R.~Anderson, J.~Andrej, A.~Barker, and et~al.
\newblock M{FEM}: {A} modular finite element methods library.
\newblock {\em Comput. Math. Appl.}, 81:42--74, 2021.

\bibitem{Ceruti2024}
G.~Ceruti, L.~Einkemmer, J.~Kusch, and C.~Lubich.
\newblock A robust second-order low-rank {BUG} integrator based on the midpoint
  rule.
\newblock {\em BIT}, 64(3):Paper No. 30, 19, 2024.

\bibitem{Ceruti2022}
G.~Ceruti, M.~Frank, and J.~Kusch.
\newblock Dynamical low-rank approximation for {Marshak} waves.
\newblock Technical Report~76, {Karlsruher Institut f{\"{u}}r Technologie
  (KIT)}, 2022.

\bibitem{CerutiKuschLubich2022}
G.~Ceruti, J.~Kusch, and C.~Lubich.
\newblock A rank-adaptive robust integrator for dynamical low-rank
  approximation.
\newblock {\em BIT}, 62(4):1149--1174, 2022.

\bibitem{Deriaz2018}
E.~Deriaz and S.~Peirani.
\newblock Six-dimensional adaptive simulation of the {V}lasov equations using a
  hierarchical basis.
\newblock {\em Multiscale Model. Simul.}, 16(2):583--614, 2018.

\bibitem{dolejsi2015}
V.~Dolej{\v s}{\'i} and M.~Feistauer.
\newblock {\em Discontinuous {G}alerkin method}.
\newblock Springer, Cham, 2015.

\bibitem{einkemmer2021}
L.~Einkemmer and I.~Joseph.
\newblock A mass, momentum, and energy conservative dynamical low-rank scheme
  for the {V}lasov equation.
\newblock {\em J. Comput. Phys.}, 443:Paper No. 110495, 16, 2021.

\bibitem{einkemmer2024review}
L.~Einkemmer, K.~Kormann, J.~Kusch, R.~G. McClarren, and J.-M. Qiu.
\newblock A review of low-rank methods for time-dependent kinetic simulations.
\newblock {\em arXiv:2412.05912}, 2024.

\bibitem{einkemmer2025}
L.~Einkemmer, J.~Kusch, and S.~Schotth{\"o}fer.
\newblock Construction of high-order conservative basis-update and {{Galerkin}}
  dynamical low-rank integrators.
\newblock {\em arXiv:2311.06399v3}, 2025.

\bibitem{einkemmer2018a}
L.~Einkemmer and C.~Lubich.
\newblock A low-rank projector-splitting integrator for the {V}lasov-{P}oisson
  equation.
\newblock {\em SIAM J. Sci. Comput.}, 40(5):B1330--B1360, 2018.

\bibitem{einkemmer2023}
L.~Einkemmer, A.~Ostermann, and C.~Scalone.
\newblock A robust and conservative dynamical low-rank algorithm.
\newblock {\em J. Comput. Phys.}, 484:Paper No. 112060, 20, 2023.

\bibitem{ern_2021}
A.~Ern and J.-L. Guermond.
\newblock {\em Finite elements {III}---first-order and time-dependent {PDE}s}.
\newblock Springer, Cham, 2021.

\bibitem{guo2016}
W.~Guo and Y.~Cheng.
\newblock A sparse grid discontinuous {G}alerkin method for high-dimensional
  transport equations and its application to kinetic simulations.
\newblock {\em SIAM J. Sci. Comput.}, 38(6):A3381--A3409, 2016.

\bibitem{guo2024a}
W.~Guo, J.~F. Ema, and J.-M. Qiu.
\newblock A local macroscopic conservative ({L}o{M}a{C}) low rank tensor method
  with the discontinuous {G}alerkin method for the {V}lasov dynamics.
\newblock {\em Commun. Appl. Math. Comput.}, 6(1):550--575, 2024.

\bibitem{guo2024b}
W.~Guo and J.-M. Qiu.
\newblock A conservative low rank tensor method for the {V}lasov dynamics.
\newblock {\em SIAM J. Sci. Comput.}, 46(1):A232--A263, 2024.

\bibitem{Hauck2023}
C.~D. Hauck and S.~Schnake.
\newblock A predictor-corrector strategy for adaptivity in dynamical low-rank
  approximations.
\newblock {\em SIAM J. Matrix Anal. Appl.}, 44(3), 2023.

\bibitem{hesthaven2008}
J.~S. Hesthaven and T.~Warburton.
\newblock {\em Nodal discontinuous {G}alerkin methods}.
\newblock Springer, New York, 2008.

\bibitem{Huang2023}
J.~Huang, W.~Guo, and Y.~Cheng.
\newblock Adaptive sparse grid discontinuous {G}alerkin method: review and
  software implementation.
\newblock {\em Commun. Appl. Math. Comput.}, 6(1):501--532, 2024.

\bibitem{Koch07}
O.~Koch and C.~Lubich.
\newblock Dynamical low-rank approximation.
\newblock {\em SIAM J. Matrix Anal. Appl.}, 29(2):434--454, 2007.

\bibitem{lubich2014}
C.~Lubich and I.~V. Oseledets.
\newblock A projector-splitting integrator for dynamical low-rank
  approximation.
\newblock {\em BIT}, 54(1):171--188, 2014.

\bibitem{nobile2025a}
F.~Nobile and S.~Riffaud.
\newblock Robust high-order low-rank {BUG} integrators based on explicit
  {R}unge-{K}utta methods.
\newblock {\em arXiv:2502.07040}, 2025.

\bibitem{uschmajew2024}
A.~Uschmajew and A.~Zeiser.
\newblock Dynamical low-rank approximation of the {V}lasov-{P}oisson equation
  with piecewise linear spatial boundary.
\newblock {\em BIT}, 64(2):Paper No. 19, 26, 2024.

\bibitem{Verboncoeur2005}
J.~P. Verboncoeur.
\newblock Particle simulation of plasmas: Review and advances.
\newblock {\em Plasma Phys. Control. Fusion}, 47:A231--A260, 2005.

\bibitem{Wieners2023}
C.~Wieners.
\newblock A space-time discontinuous {G}alerkin discretization for the linear
  transport equation.
\newblock {\em Comput. Math. Appl.}, 152:294--307, 2023.

\end{thebibliography}
\bibliographystyle{plain}

\end{document}